\DeclareRobustCommand*{\bfseries}{%
	\not@math@alphabet\bfseries\mathbf
	\fontseries\bfdefault\selectfont
	\boldmath
}
\numberwithin{equation}{section}
\newtheorem{proposition}[equation]{Proposition}
\newtheorem{lemma}[equation]{Lemma}
\newtheorem{corollary}[equation]{Corollary}
\newtheorem*{intro_prop:taut_eqal}{Proposition \ref{prop:lower=upper}}
\newtheorem*{intro_prop:graphs}{Proposition \ref{prop:nonisom_graphs}}
\newtheorem*{intro_prop:veering}{Proposition \ref{prop:not_equal}}
\newtheorem*{intro_prop:taut_algorithm}{Proposition \ref{prop:veering_output}}
\newtheorem*{intro_prop:veering_algorithm}{Proposition \ref{prop:big_algorithm_correct}}
\newtheorem*{intro_prop:teich_algorithm}{Proposition \ref{prop:alg_teich_correct}}
\theoremstyle{definition}
\newtheorem{definition}[equation]{Definition}
\newtheorem{remark}[equation]{Remark}
\newtheorem*{remark*}{Remark}
\newtheorem*{Notation*}{Notation}
\newtheorem*{warning*}{Warning}
\newtheorem{example}[equation]{Example}
\newcommand{\ZH}{\zz \lbrack H \rbrack}
\newcommand{\Fit}{\mathrm{Fit}}
\newcommand{\bez}{\setminus}
\newcommand{\zz}{\mathbb{Z}}
\newcommand{\rr}{\mathbb{R}}
\newcommand{\bigslant}[2]{{\raisebox{.2em}{$#1$}\left/\raisebox{-.2em}{$#2$}\right.}}
\newcommand{\Mab}{M^{ab}}
\newcommand{\Xab}{X^{ab}}
\newcommand{\Vab}{\mathcal{V}^{ab}}
\newcommand{\Tab}{\mathcal{T}^{ab}}
\newcommand{\Dab}{\mathcal{D}^{ab}}
\newcommand{\lowert}{\tau^{ab,L}}
\newcommand{\upert}{\tau^{ab,U}}
\newcommand{\face}[1]{\raisebox{.03em}{\large{$\mathtt{#1}$}}}
\newcommand{\module}{\mathcal{E}}
\algrenewcommand{\algorithmiccomment}[1]{\hspace*{\fill}
	\color{gray}\small $\#$  #1 \color{black}\normalsize}
\begin{document}

	\numberwithin{equation}{section}
	
	\title[The taut, the veering and the Teichm\"uller polynomials]{Computation of the taut, the veering and the Teichm\"uller polynomials}
	
	\author{Anna Parlak}
	\address{Mathematics Institute, University of Warwick, Coventry CV4 7AL, United	Kingdom}
	\email{a.parlak@warwick.ac.uk}

	\thanks{This work was supported by The Engineering and Physical Sciences Research Council (EPSRC) under grant EP/N509796/1 studentship 1936817}

	\keywords{veering triangulation, taut polynomial, veering polynomial, Teichm\"uller polynomial, fibration over the circle} 
	\subjclass[2000]{Primary 57M27; Secondary 57-04, 12-04}

	\begin{abstract}
		Landry, Minsky and Taylor [LMT] introduced two polynomial invariants of veering triangulations --- the taut polynomial and the veering polynomial. We give algorithms to compute these invariants. In their definition [LMT] use only the upper track of the veering triangulation, while we consider both the upper and the lower track. We prove that the lower and the upper taut polynomials are equal. However, we show that there are veering triangulations whose lower and upper veering polynomials are different.

		[LMT] related the Teichm\"uller polynomial of a fibred face of the Thurston norm ball with the taut polynomial of the associated layered veering triangulation. We use this result to give an algorithm to compute the Teichm\"uller polynomial of any fibred face of the Thurston norm ball.
	\end{abstract}
	
	\maketitle%
	
	\section{Introduction}
	Let $M$ denote a compact, oriented, connected 3-manifold with a finite, but positive, number of toroidal boundary components. By making boundary components into torus cusps we can represent such a manifold by an  ideal triangulation ---  a decomposition into tetrahedra with vertices removed. Here we consider a special class of ideal triangulations called \emph{(transverse taut) veering}.  They were introduced by Agol as a way to canonically triangulate pseudo-Anosov mapping tori \cite{Agol_veer}. More generally, veering triangulations are tightly connected to pseudo-Anosov flows without perfect fits~\cite{SchleimSegLinks}.
	
	This paper relies heavily on the work of Landry, Minsky and Taylor \cite{LMT}. They defined the \emph{taut polynomial}, the \emph{veering polynomial} and the \emph{flow graph} of a  (transverse taut) veering triangulation. 
	Their definitions refer to canonical train tracks embedded in the 2-skeleton of a veering triangulation called the \emph{upper} and \emph{lower tracks}; see Subsection \ref{subsec:dual_tracks}. The authors of  \cite{LMT} always work with the upper track. In this paper we consider both train tracks. Given a veering triangulation we compare its lower and upper taut polynomials, its lower and upper veering polynomials and its lower and upper flow graphs. 
	
	\begin{intro_prop:taut_eqal}
		Let $\mathcal{V}$ be a veering triangulation. The lower taut polynomial $\Theta^L_{\mathcal{V}}$ of $\mathcal{V}$ is (up to a unit) equal to the upper taut polynomial $\Theta^U_{\mathcal{V}}$ of~$\mathcal{V}$. 
	\end{intro_prop:taut_eqal}
	Hence there is only one taut polynomial associated to a veering triangulation. However, a similar statement does \emph{not} hold for the flow graphs and the veering polynomials. We give examples which show the following.
	\begin{intro_prop:graphs}
		There exists a veering triangulation $\mathcal{V}$ whose lower and upper flow graphs are not isomorphic. 
	\end{intro_prop:graphs}
	\begin{intro_prop:veering}
		There exists a veering triangulation $\mathcal{V}$ for which the lower veering polynomial $V^L_\mathcal{V}$ is not equal to the upper veering polynomial $V^U_\mathcal{V}$ up to a unit.
	\end{intro_prop:veering}
	Thus to a triangulation with a veering structure we can canonically associate a \emph{pair} of flow graphs and a \emph{pair} of veering polynomials.
	
	The majority of the results of this paper are computational. We simplify the original presentation for the  taut module. As a result, the computation of the taut polynomial requires computing only linearly many minors of a matrix, instead of exponentially many  (Proposition~\ref{prop:non-tree_generate}). Using this we give algorithm \texttt{TautPolynomial}. We prove
	\begin{intro_prop:taut_algorithm}
	The output of \emph{\texttt{TautPolynomial}} applied to a veering triangulation $\mathcal{V}$ is equal to the taut polynomial $\Theta_{\mathcal{V}}$ of $\mathcal{V}$.
	\end{intro_prop:taut_algorithm}
	
	We also give algorithm \texttt{LowerVeeringPolynomial} and prove
	\begin{intro_prop:veering_algorithm}
		The output of \emph{\texttt{LowerVeeringPolynomial}} applied to a veering triangulation $\mathcal{V}$ is equal to the lower veering polynomial $V^L_{\mathcal{V}}$ of $\mathcal{V}$.
	\end{intro_prop:veering_algorithm} 
	The upper veering polynomial of $\mathcal{V}$ can be computed as the lower veering polynomial of the veering triangulation obtained from $\mathcal{V}$ by reversing the coorientation on its 2-dimensional faces; see Remark \ref{remark:changes_on_tracks}.
	
	When a veering triangulation $\mathcal{V}$ of $M$ carries a fibration over the circle, we can associate to it a fibred face $\face{F}$ of the Thurston norm ball in $H_2(M, \partial M;\rr)$ \cite[Proposition~2.7]{MinskyTaylor}. 	Face $\face{F}$ is \emph{fully-punctured}. 
	By \cite[Theorem 7.1]{LMT} the  taut polynomial $\Theta_{\mathcal{V}}$ of $\mathcal{V}$ is equal to the Teichm\"uller polynomial~$\Theta_{\mathtt{F}}$ of  $\face{F}$. 	 
	Thus \texttt{TautPolynomial} is an  algorithm to compute the Teichm\"uller polynomial of a fully-punctured fibred face. As observed in \cite[Subsection~7.2]{LMT} this can be generalised to all fibred faces via puncturing. 
	We  present the details of this in algorithm \texttt{Teichm\"ullerPolynomial} and prove
	\begin{intro_prop:teich_algorithm}
		Let $\psi:S \rightarrow S$ be a pseudo-Anosov homeomorphism. Denote by $N$  its mapping torus. Let $\face{F}$ be the fibred face of the Thurston norm ball in $H_2(N, \partial N;\rr)$ such that  $\lbrack S \rbrack \in \rr_+ \hspace{-0.1cm}\cdot \face{F}$. Then the output of \emph{\texttt{Teichm\"ullerPolynomial}}$(\psi)$ is equal to the Teichm\"uller polynomial $\Theta_{\emph{\texttt{F}}}$ of~$\face{F}$.
	\end{intro_prop:teich_algorithm}
	
	Different algorithms to compute the Teichm\"uller polynomial in some special cases have been previously developed. In \cite{LanVal} Lanneau and Valdez present an algorithm to compute the Teichm\"uller polynomial of punctured disk bundles. The authors of \cite{BWKJ}  compute the Teichm\"uller polynomial of \emph{odd-block surface} bundles.  In~\cite{BilletLechti} Billet and Lechti cover the case of alternating-sign Coxeter links. 
	
	The algorithm presented in this paper is general and in principle can be applied to any hyperbolic, orientable 3-manifold fibred over the circle. Moreover, it has already been implemented. The source code is available at~\cite{VeeringCensus}.

	\subsection*{Pseudocodes} 
	\begin{enumerate}[label=\arabic*., wide=0pt, leftmargin=*]
		\item \texttt{FacePairings} (Subsection \ref{subsec:alg_encoding})
		
		\noindent Encodes a triangulation of a free abelian cover of $M$ induced by a transverse taut triangulation of $M$
		
		\item \texttt{TautPolynomial} (Subsection \ref{subsec:computation})
		
		\noindent Computes the taut polynomial $\Theta_{\mathcal{V}}$ of a veering triangulation $\mathcal{V}$
		\item \texttt{LowerVeeringPolynomial} (Subsection \ref{subsec:comp_big})
		
		\noindent Computes the lower veering polynomial $V^L_\mathcal{V}$ of a veering triangulation~$\mathcal{V}$
		
		\item \texttt{BoundaryCycles} (Subsection \ref{subsec:boundary_cycles})
		
		\noindent Finds simplicial 1-cycles in the dual graph of a transverse taut triangulation which are homologous to the boundary components of a surface carried by the 2-skeleton of this triangulation
		
		\item \texttt{Teichm\"ullerPolynomial} (Subsection \ref{subsec:Teich_comp})
		
		\noindent Computes the Teichm\"uller polynomial $\Theta_{\texttt{F}}$ of a fibred face $\face{F}$ of the Thurston norm ball
	\end{enumerate}

	All pseudocodes have been implemented by the author, Saul Schleimer and Henry Segerman. The source codes are available at~\cite{VeeringCensus}.

	\subsection*{Acknowledgements} I am grateful to Samuel Taylor for explaining to me his work on the veering polynomial during my visit at Temple University in July 2019 and subsequent conversations. I thank Saul Schleimer and Henry Segerman for their generous assistance in implementing the algorithms presented in this paper. I also thank Mark Bell for answering my questions about \texttt{flipper}.
	
	This work has been written during PhD studies of the author at the University of Warwick under the supervision of Saul Schleimer, funded by the EPSRC. 
	\section{Transverse taut veering triangulations}\label{sec:pre}
	\subsection*{Ideal triangulations of 3-manifolds}\label{subsec:triangulations}
	An \emph{ideal triangulation} of an oriented 3-manifold~$M$ with torus cusps is a decomposition of $M$ into ideal tetrahedra. Let $n$ be the number of tetrahedra in this decomposition. Since~$M$ has zero Euler characteristic, the number of 2-dimensional faces and the number of edges of the triangulation equals $2n$ and $n$, respectively. The 2-dimensional faces are often called \emph{triangles}. Every triangle of a triangulation has two \emph{embeddings} into two, not necessarily distinct, tetrahedra. The number of (embeddings of) triangles attached to an edge is called the \emph{degree} of this edge. An edge of degree $d$ has $d$ embeddings into triangles and~$d$ embeddings into tetrahedra. 
	
	We denote an ideal triangulation by $\mathcal{T} = (T, F, E)$, where $T, F , E$ denote the set of tetrahedra, triangles and edges, respectively.
	Note that by \emph{edges of a triangle/tetrahedron} or \emph{triangles of a tetrahedron} we mean embeddings of these ideal simplices into the boundary of a higher dimensional ideal simplex.  Similarly, by \emph{triangles/tetrahedra attached to an edge} we mean triangles/tetrahedra in which the edge is embedded, together with this embedding. 
	When we claim that two lower dimensional simplices of a higher dimensional simplex are different we mean that at least their embeddings are different.
	
	\subsection*{Compact and cusped models}
	If we truncate the corners of ideal tetrahedra of $\mathcal{T}$ we obtain a compact 3-manifold with toroidal boundary components. The interior of this manifold is homeomorphic to $M$. For notational simplicity we also denote it by $M$. We freely alternate between the \emph{cusped} and the \emph{compact} models of $M$.  The main advantage of the latter is that we can consider curves in the boundary $\partial M$. 
	\subsection*{Transverse taut triangulations}
	An ideal tetrahedron $t$ is \emph{transverse taut} if each of its faces is cooriented so that  the coorientation on two of the faces of $t$ points into $t$ and on the remaining two faces it points out $t$ \cite[Definition 1.2]{veer_strict-angles}. We call the pair of faces whose coorientations point out of $t$ the \emph{top faces} of~$t$ and the pair of faces whose coorientations point into $t$ the \emph{bottom faces} of $t$. We also say that $t$ is \emph{immediately below} its top faces and \emph{immediately above} its bottom faces. 
	
	We draw a transverse taut tetrahedron as a quadrilateral with two diagonals --- one on top of the other; see Figure~\ref{fig:veering_tetra}. The \emph{top diagonal} is the common edge of the two top faces of $t$ and the \emph{bottom diagonal} is the common edge of the two bottom faces of~$t$. The remaining four edges of a transverse taut tetrahedron are called its \emph{equatorial edges}. This way of presenting a transverse taut tetrahedron $t$ naturally endows it with an abstract assignment of angles from $\lbrace 0, \pi\rbrace$ to its edges. Angle $\pi$ is assigned to both diagonal edges of $t$ and angle 0 is assigned to all its equatorial edges. Such an assignment of angles is called a \emph{taut structure} on $t$ \cite[Definition~1.1]{veer_strict-angles}.
	
	A triangulation $\mathcal{T}=(T, F, E)$ is \emph{transverse taut} if \begin{itemize}
		\item every ideal triangle $f \in F $ is assigned a coorientation so that every ideal tetrahedron $t \in T$ is transverse taut,
		\item for every edge $e \in E$ the sum of angles of the underlying taut structure of $\mathcal{T}$, over all embeddings of $e$ into tetrahedra, equals $2\pi$ \cite[Definition 1.2]{veer_strict-angles}.	
	\end{itemize}
	This implies that triangles attached to an edge $e$ are grouped into two \emph{sides}, separated by a pair of $\pi$ angles at $e$; see Figure \ref{fig:branch_eqn}. We distinguish a pair of the \emph{lowermost} and a pair of the \emph{uppermost} (relative to the coorientation) triangles attached to $e$. In Figure~\ref{fig:branch_eqn} triangles $f_1, f_1'$ are the lowermost and triangles $f_3, f_2'$ are the uppermost.
	
	We say that a tetrahedron  $t \in T$ is \emph{immediately below} $e \in E $ if $e$ is the top diagonal of~$t$, and \emph{immediately above} $e$, if $e$ is the bottom diagonal of~$t$. The remaining $\text{degree}(e)-2$ tetrahedra attached to $e$ are called its \emph{side tetrahedra}. Similarly as with triangles, we distinguish a pair of the \emph{lowermost} and a pair of the \emph{uppermost} side tetrahedra of $e$. 
	
	We denote a triangulation with a transverse taut structure by $(\mathcal{T}, \alpha)$.  Note that if~$M$ has a transverse taut triangulation $(\mathcal{T}, \alpha)$ then it also has a transverse taut triangulation obtained from $(\mathcal{T}, \alpha)$ by reversing coorientations on all faces of~$\mathcal{T}$. We denote this triangulation by $(\mathcal{T}, -\alpha)$. These two triangulations have the same underlying taut structure, with tops and bottoms of tetrahedra swapped. 
	\begin{remark*} Triangulations as described above were introduced by Lackenby in~\cite{Lack_taut}, where they are called  \emph{taut triangulations}. 
	\end{remark*}
	\subsection*{Veering triangulations} 	A \emph{veering tetrahedron} is an oriented taut tetrahedron whose equatorial edges are coloured alternatingly red and blue as shown in Figure~\ref{fig:veering_tetra}. A taut triangulation  is \emph{veering} if a colour (red/blue) is assigned to each edge of the triangulation so that every tetrahedron is veering. The convention is that red colour indicates a right-veering edge, and blue colour indicates a left-veering edge; see the original definition of veering triangulations due to Agol \cite[Definition 4.1]{Agol_veer}.
	\begin{figure}[h]
		\begin{center}
			\includegraphics[width=0.21\textwidth]{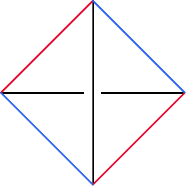}
		\end{center}
		\caption{A veering tetrahedron. The underlying taut structure assigns angle zero to the equatorial edges of the tetrahedron and angle $\pi$ to its diagonal edges.}
		\label{fig:veering_tetra} 
	\end{figure}
	
	In this paper we consider only transverse taut veering triangulations, where we can distinguish top faces from bottom faces. We skip the adjective ``transverse taut" in the remaining of the paper. 
	Every edge of a veering triangulation is of degree at least 4 and has at least two triangles on each of its sides \cite[Definition~4.1]{Agol_veer}. We denote a veering triangulation by $\mathcal{V}= (\mathcal{T}, \alpha, \nu)$, where $\nu$ corresponds to the colouring of edges. If $M$ has a veering triangulation $(\mathcal{T}, \alpha, \nu)$, then it also has a veering triangulation $(\mathcal{T}, -\alpha, \nu)$, where the coorientations on faces are reversed, $(\mathcal{T}, \alpha, -\nu)$, where the colours on edges are interchanged and   $(\mathcal{T},- \alpha, -\nu)$, where both coorientations of faces and colours on edges are interchanged. Note that the operation $\nu \mapsto -\nu$ corresponds to reversing the orientation of $M$.
	
	The following lemma and the subsequent corollary will be used in the proof of Proposition \ref{prop:big_algorithm_correct}, where we prove that algorithm \texttt{LowerVeeringPolynomial} correctly computes the lower veering polynomial. 
	
	\begin{lemma}\label{lemma:same_colour}
		Let $\mathcal{V}= ((T,F, E), \alpha, \nu)$ be a veering triangulation. 
		Then for any $f \in F$ the bottom diagonal of a tetrahedron immediately above~$f$ and the top diagonal of the tetrahedron immediately below $f$ are of the same colour.
	\end{lemma}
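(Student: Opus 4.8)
The plan is to deduce the statement from the local combinatorics of a single veering tetrahedron, using crucially that only two colours are available. Fix $f \in F$, let $t_-$ be the tetrahedron immediately below $f$ and $t_+$ the one immediately above $f$; thus $f$ occurs as a top face of $t_-$ and as a bottom face of $t_+$, and its three edges $e_1,e_2,e_3$ include both the top diagonal of $t_-$ and the bottom diagonal of $t_+$. I will show that these two distinguished edges receive the same colour --- indeed that they coincide.

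The first step is a local observation about veering tetrahedra, to be read off from Figure~\ref{fig:veering_tetra}: if $g$ is a top face of a veering tetrahedron $t$, then exactly one edge of $g$ is the top diagonal of $t$, and the other two edges of $g$ are equatorial edges of $t$ lying on \emph{adjacent} sides of the square representing $t$; symmetrically for bottom faces and the bottom diagonal. This holds because a top face contains the top diagonal and cannot contain the bottom diagonal (which uses the two vertices of $t$ not on the top diagonal), so its remaining two edges are equatorial, and in the square picture those two sides meet at a common corner of the square. Since the four equatorial edges are coloured alternately around the square, two equatorial edges sharing a corner must have different colours.

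The second step applies this observation twice. With $g=f$ and $t=t_-$: one of $e_1,e_2,e_3$, say $e_1$, is the top diagonal of $t_-$, while $e_2$ and $e_3$ are coloured differently; hence exactly one colour occurs twice among the three edges of $f$, and $e_1$ is the edge carrying it. Thus the top diagonal of $t_-$ is characterised intrinsically as the unique edge of $f$ whose colour is the majority colour among the edges of $f$. Repeating the argument with $t=t_+$ identifies the bottom diagonal of $t_+$ with exactly the same edge of $f$, and the lemma follows.

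I expect the only real content to lie in the local observation of the first step --- that the two equatorial edges contained in a single face of a veering tetrahedron occupy adjacent, hence oppositely coloured, sides of the square --- which is dispatched by direct inspection of the square-with-diagonals model. I would also note that possible self-identifications among $e_1,e_2,e_3$ cause no difficulty: the colour of an edge of $\mathcal{V}$ is well defined however many times it is glued into $f$, and the counting in the second step uses only that the two equatorial edges of $f$ visible from either side are oppositely coloured.
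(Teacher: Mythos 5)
Your approach is essentially the paper's: both proofs rest on the same local observation that a face $f$ of a veering tetrahedron contains exactly one diagonal edge plus two equatorial edges of opposite colours, so that exactly one colour occurs twice among the three edges of $f$. However, your argument contains a genuine logical error at the key step.

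You write that since $e_2$ and $e_3$ are coloured differently, ``$e_1$ is the edge carrying'' the majority colour, and hence the top diagonal of $t_-$ is ``the unique edge of $f$ whose colour is the majority colour.'' This uniqueness claim is false: there are always \emph{two} edges of $f$ with the majority colour, namely $e_1$ and whichever of $e_2,e_3$ happens to share its colour. Because the characterisation is not unique, your conclusion that the bottom diagonal of $t_+$ must be ``exactly the same edge of $f$'' does not follow --- and in fact it is false. As the paper's own proof observes, the two distinguished edges are necessarily \emph{distinct} edges in the boundary of $f$; each is a different one of the two majority-colour edges. Your stronger claimed conclusion (``indeed that they coincide'') is wrong even though the lemma itself is true.

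The argument is easily repaired: keep the local observation, note that exactly one colour is repeated among the three edges of $f$, and conclude only that \emph{both} the top diagonal of $t_-$ and the bottom diagonal of $t_+$ carry that repeated colour (each being a diagonal edge in a face whose other two edges are oppositely coloured). That both carry the majority colour is precisely the statement that they have the same colour, which is all the lemma asserts. With this one-word weakening --- ``an edge'' in place of ``the unique edge'' --- your proof is correct and matches the paper's argument, which phrases the same observation contrapositively: if the two diagonals had different colours, $f$ would need at least four edges.
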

	\begin{proof}
		Denote by $b$ the bottom diagonal of the tetrahedron immediately above~$f$ and by $t$  the top diagonal of the tetrahedron immediately below~$f$. Then clearly $b, t$ are distinct edges in the boundary of $f$, otherwise $\mathcal{V}$ would not be veering. Since $b,t$ are diagonal edges, there is another edge of $f$ which has the same colour as $b$ and an edge of $f$ which has the same colour as $t$. Thus $b,t$ cannot be of a different colour.
	\end{proof}
	\begin{definition}
		We say that a triangle of a veering triangulation is \emph{red} (respectively \emph{blue}) if two of its edges are red (respectively blue).
	\end{definition}
	\begin{corollary}\label{cor:triangles_of_same_colour}
		Let $\mathcal{V}= ((T,F, E), \alpha, \nu)$ be a veering triangulation. Among all triangles attached to an edge $e\in E$ there are exactly four which have the same colour as $e$. They are the two uppermost and the two lowermost triangles attached to $e$.
	\end{corollary}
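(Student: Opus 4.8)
The plan is to first determine, for each triangle $f$ of $\mathcal{V}$, which of its three edges share the colour of $f$. Combining the local picture of a veering tetrahedron (Figure~\ref{fig:veering_tetra}) with Lemma~\ref{lemma:same_colour} should yield the following structural fact: the two edges of $f$ having the colour of $f$ are exactly the top diagonal of the tetrahedron immediately below $f$ and the bottom diagonal of the tetrahedron immediately above $f$, while the third edge of $f$ carries the opposite colour. To see this, view $f$ as a top face of the tetrahedron immediately below it; then the edges of $f$ are that tetrahedron's top diagonal $t$ together with two of its equatorial edges, and those two equatorial edges are consecutive sides of the square in Figure~\ref{fig:veering_tetra}, hence of opposite colours. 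As in the proof of Lemma~\ref{lemma:same_colour}, the bottom diagonal $b$ of the tetrahedron immediately above $f$ is an edge of $f$ distinct from $t$, so $b$ is one of those two equatorial edges; by Lemma~\ref{lemma:same_colour} it has the colour of $t$, which forces the remaining equatorial edge of $f$ to have the opposite colour. Thus $f$ has precisely the two like-coloured edges $t$ and $b$.

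Next I would reformulate the claim. A triangle $f$ attached to $e$ has the colour of $e$ exactly when $e$ is one of the two like-coloured edges of $f$ --- if $e$ were the third, ``odd'', edge of $f$, the colours would disagree. By the structural fact above, this happens if and only if $e$ is the top diagonal of the tetrahedron immediately below $f$, or $e$ is the bottom diagonal of the tetrahedron immediately above $f$.

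It then remains to count, using the description of the tetrahedra and triangles around an edge recalled near Figure~\ref{fig:branch_eqn}. The tetrahedron immediately below $e$ is the unique tetrahedron having $e$ as its top diagonal, and its two top faces are exactly the two lowermost triangles attached to $e$; symmetrically, the tetrahedron immediately above $e$ is the unique one with $e$ as its bottom diagonal, and its two bottom faces are exactly the two uppermost triangles attached to $e$. Hence the first alternative in the preceding paragraph selects precisely the two lowermost triangles, the second precisely the two uppermost, and no triangle satisfies both --- otherwise $e$ would be simultaneously the top diagonal of the tetrahedron below it and the bottom diagonal of the tetrahedron above it, contradicting the distinctness of $t$ and $b$ established above. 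Since the two lowermost and the two uppermost triangles attached to $e$ are four distinct objects (every edge of a veering triangulation has at least two triangles on each side), we conclude that exactly four triangles attached to $e$ carry its colour, and they are the two uppermost and the two lowermost, as claimed.

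The only genuinely delicate point is the structural fact in the first paragraph --- reading off the two like-coloured edges of an arbitrary triangle from the veering picture together with Lemma~\ref{lemma:same_colour}. Once it is in place, the reformulation of the claim and the final count are routine bookkeeping with the description of the tetrahedra and triangles incident to an edge already recalled above.
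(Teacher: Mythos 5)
Your proof is correct and follows essentially the same route as the paper's: both prove the claim by invoking Lemma~\ref{lemma:same_colour} to show that, for any triangle $f$, the top diagonal of the tetrahedron below $f$ and the bottom diagonal of the tetrahedron above $f$ are the two like-coloured edges of $f$, and then observing that an edge $e$ plays one of these roles for $f$ exactly when $f$ is a lowermost or uppermost triangle at $e$. You have simply extracted this observation as a standalone "structural fact" before counting, whereas the paper folds it into the contrapositive direction of the argument.
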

	\begin{proof}
		By Lemma \ref{lemma:same_colour} the lowermost and uppermost triangles attached to~$e$ are of the same colour as $e$.
		Conversely, suppose $f \in F$ is neither a lowermost nor an uppermost triangle around $e$. Then $e$ is an equatorial edge of both the tetrahedron immediately above $f$ and the tetrahedron immediately below $f$. Again by Lemma \ref{lemma:same_colour}  the bottom diagonal of the first and the top diagonal of the latter are of the same colour. Since they are different edges of $f$ (otherwise~$\mathcal{V}$ would not be veering), it follows that $f$ is of a different colour than $e$.
	\end{proof}
	\subsection*{The veering census}
	The data on transverse taut veering structures on ideal triangulations of orientable 3-manifolds consisting of up to 16 tetrahedra is available in the veering census \cite{VeeringCensus}. A veering triangulation in the census is described by a string of the form
	\begin{equation}\label{string}
		\texttt{[isoSig]\underline{ }[taut angle structure]}.
	\end{equation}
	The first part of this string is the isomorphism signature of the triangulation. It identifies a triangulation uniquely up to combinatorial isomorphism. Isomorphism signatures have been introduced in \cite[Section 3]{Burton_isoSig}. The second part of the string records the transverse taut structure, up to a sign. This means that an entry from the veering census determine $(\mathcal{T}, \pm \alpha, \pm \nu)$. The sign of  $\pm \nu$ depends on the sign of $\pm \alpha$ and the orientation of the underlying manifold. 
	
	We use this description whenever we refer to a concrete example of a veering triangulation. Implementations of all algorithms given in this paper take as an input a string of the form \eqref{string}.
	\section{Structures associated to a transverse taut triangulation}\label{sec:structures}
	In this section we recall the definitions of the \emph{horizontal branched surface} \cite[Subsection 2.12]{SchleimSegLinks}, the \emph{boundary track} \cite[Section 2]{explicit_angle} and the \emph{lower} and \emph{upper tracks} associated to a transverse taut triangulation \cite[Definition~4.7]{SchleimSegLinks}. The lower and upper tracks are directly used in the definition of the taut polynomial; see Section \ref{sec:taut_poly}. The boundary track is used in Subsection \ref{subsec:boundary_cycles} to encode boundary components of a surface carried by the transverse taut triangulation.
	\subsection{Horizontal branched surface}\label{subsec:horizontal}
	Let $(\mathcal{T}, \alpha)$ be a transverse taut triangulation of $M$.  Since $\mathcal{T}$ is endowed with a compatible taut structure, we can view the 2-skeleton $\mathcal{T}^{(2)}$ of~$\mathcal{T}$ as a 2-dimensional complex with a well-defined tangent space everywhere, including along its 1-skeleton. Thus ~$\mathcal{T}^{(2)}$ determines a \emph{branched surface} (without vertices) in $M$. It is called the \emph{horizontal branched surface} and denoted by $\mathcal{B}$ \cite[ Subsection 2.12]{SchleimSegLinks}. The branch locus of $\mathcal{B}$ is equal to the 1-skeleton $\mathcal{T}^{(1)}$. In particular, we can see~$\mathcal{B}$ as  \emph{ideally triangulated} by the triangular faces of $\mathcal{T}$. We denote this triangulation of~$\mathcal{B}$ by $(F, E)$. For a more general definition of a branched surface see \cite[p.~532]{Oertel_lam}.

	\subsection*{Branch equations} 
	Let $e \in E$ be an edge of degree $d$ of a transverse taut triangulation~$\mathcal{T}$.  Let $f_{1}, f_{2}, \ldots, f_{k}$ be triangles attached to $e$ on the right side, ordered from the bottom to the top. Let $f'_1, f'_{2}, \ldots, f'_{d-k}$ be triangles attached to $e$ on the left side, also ordered from the bottom to the top. 
	Then~$e$ determines the following relation between the triangles attached to it
	\begin{equation} \label{eqgn:branch_eqn}
		f_{1} + f_{2} + \ldots + f_{k} = f'_1 + f'_2 + \ldots + f'_{d-k}.
	\end{equation}
	\begin{figure}[h]
		\begin{center}
			\includegraphics[width=0.26\textwidth]{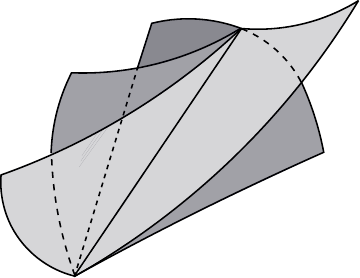} 
			\put(-120,15){$f_3$}
			\put(-106,45){$f_2$}
			\put(-77,75){$f_1$}
			\put(-2,67){$f'_2$}
			\put(-15,25){$f'_1$}
		\end{center}
		\caption{Edge with the branch equation $f_1+f_2+f_3=f'_1+f'_2$.}
		\label{fig:branch_eqn}
	\end{figure}
	We call this equation the \emph{branch equation} of $e$. An example is given in Figure \ref{fig:branch_eqn}.
	A transverse taut triangulation with $n$ tetrahedra determines a system of~$n$ branch equations. 	
	In Subsection \ref{subsec:encoding} we consider a matrix
	\[B:\zz^E \rightarrow \zz^F,\]
	which assigns to an edge $e$ its branch equation. 
	$B$ is called the \emph{branch equations matrix} for $(\mathcal{T}, \alpha)$. For $e \in E$ as in \eqref{eqgn:branch_eqn} we have	
	\[B(e) = f_{1} + f_{2} + \ldots + f_{k} -\left( f'_1 + f'_2 + \ldots + f'_{d-k}\right).\]
	\subsection*{Surfaces carried by a transverse taut triangulation} 
	Given  a nonzero, nonnegative, integral solution $w = (w_1, \ldots, w_{2n})$ to the system of branch equations of $(\mathcal{T}, \alpha)$ we can build a surface 
	\[S^w = \sum\limits_{i=1}^{2n}w_i f_i,\]
	properly embedded in $M$.

	We say that a surface $S$ properly embedded in $M$ is \emph{carried} by a transverse taut triangulation $(\mathcal{T}, \alpha)$ if it can be realised as $S^w$ for some nonnegative $w \in \zz^{2n}$. This is equivalent to the definition given in \cite[Subsection 2.14]{SchleimSegLinks}.
	
	If there exists a strictly positive integral solution $w$ we say that $(\mathcal{T}, \alpha)$ is \emph{layered}. In this case $S^w$ is (a multiple of) a fibre of a fibration of $M$ over the circle \cite[Theorem~5.15]{LMT}. If there exists a nonnegative, nonzero integral solution, but no strictly positive integral solution, then we say that $(\mathcal{T}, \alpha)$ is \emph{measurable}.
	
	\subsection{Boundary track}\label{subsec:boundary_track}
	An object which is strictly related to the horizontal branched surface is the \emph{boundary track}. To define it, it is necessary to view the manifold $M$ with a transverse taut triangulation $(\mathcal{T}, \alpha)$ in the compact model. 	
	More information on the boundary track can be found in  \cite[Section 2]{explicit_angle}.
	\begin{definition}
		Let $(\mathcal{T}, \alpha)$ be a (truncated) transverse taut triangulation of a (compact) 3-manifold~$M$. Denote by $\mathcal{B}$ the horizontal branched surface for  $(\mathcal{T}, \alpha)$. The \emph{boundary track} $\beta$ of $(\mathcal{T}, \alpha)$  is the intersection $\mathcal{B} \cap \partial M$.
	\end{definition}
	In Figure \ref{fig:boundary_train_track} we present a local picture of a boundary track around one of its switches.  A global picture of the boundary track for the veering triangulation \texttt{cPcbbbiht\_12} of the figure eight knot complement is presented in Figure~\ref{fig:boundary_torus_m004}.  
	\begin{figure}[h]
		\begin{center}
			\includegraphics[width=0.21\textwidth]{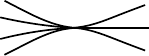}
		\end{center}
		\caption{The boundary  track around one of its switches. This switch corresponds to an endpoint of an edge of degree 7.}
		\label{fig:boundary_train_track} 
	\end{figure}
	
	Each edge of $\mathcal{T}$ has two endpoints. Therefore for every $e \in E$ the boundary track $\beta$ has two switches of the same degree that can be labelled with~$e$. Each triangle $f \in F$ has three arcs around its corners; see Figure~\ref{fig:cooriented_triangle}. These corner arcs are in a bijective correspondence with the branches of $\beta$. Therefore for every $f \in F$ the track $\beta$ has three branches that we label with~$f$.	
	If~$M$ has $b \geq 1$ boundary components $T_1, \ldots, T_b$, then~$\beta$ is a disjoint union of train tracks $\beta_1, \ldots, \beta_b$ in boundary tori $T_1, \ldots, T_b$, respectively. 

	The boundary track $\beta$ of $(\mathcal{T}, \alpha)$ is transversely oriented by $\alpha$. We  orient the branches of $\beta$ using the right hand rule and the coorientation on $f$; see Figure~\ref{fig:cooriented_triangle}. Therefore every switch has a collection of \emph{incoming} branches and a collection of \emph{outgoing branches}. Moreover, branches within these collections can be ordered from bottom to top. In particular, for every branch $\epsilon$ of $\beta$ we  can consider 
	\begin{itemize}
		\item branches outgoing from the initial switch of $\epsilon$ above $\epsilon$,
		\item branches incoming to the terminal switch of $\epsilon$ above $\epsilon$.
	\end{itemize}
	We use these observations in Subsection \ref{subsec:boundary_cycles} where we give algorithm \texttt{BoundaryCycles}.
	\begin{figure}[h]
		\begin{center}
			\includegraphics[width=0.23\textwidth]{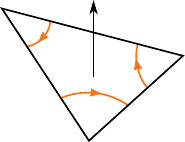}
		\end{center}
		\caption{Coorientation on a triangle of $\mathcal{T}$ determines orientation on the branches of $\beta$ by the right hand rule. Coorientation on the branches of $\beta$ (not indicated in the picture) agrees with the coorientation on the triangle in which they are embedded.}
		\label{fig:cooriented_triangle} 
	\end{figure}
	\subsection{Train tracks in the horizontal branched surface}\label{subsec:dual_tracks}
	In the previous subsection we considered the boundary track associated to a transverse taut triangulation $(\mathcal{T}, \alpha)$. In this subsection we consider an entirely different kind of train tracks associated to $(\mathcal{T}, \alpha)$, called \emph{dual train tracks}. They are  embedded in the horizontal branched surface~$\mathcal{B}$ and are dual to its triangulation $(F, E)$. A good reference for train tracks in surfaces is \cite{Penner_tt}. We need to modify the standard definition of a train track so that it is applicable to our setting.
	
	We construct train tracks in $\mathcal{B}$ dual to  the triangulation $(F,E)$ by gluing together ``ordinary" train tracks in individual triangles of that triangulation. We restrict the class of train tracks that we allow in those triangles. The train tracks that we allow are called \emph{triangular}.
	\begin{definition}\label{defn:triangle_train_track}
		Let $f$ be an ideal triangle. By a \emph{triangular train track} in~$f$ we mean a graph $\tau_f \subset f$ with four vertices and three edges, such that\begin{itemize}
			\item one vertex $v$ is in the interior of $f$ and the remaining three vertices are at the midpoints of the three edges in the boundary of $f$, one for each edge,
			\item for each vertex $v'$ different than $v$ there is an edge joining $v$ and $v'$,
			\item all edges are $C^1$-embedded,
			\item there is a well-defined tangent line to $\tau_f$ at $v$.
		\end{itemize}
		See Figure \ref{fig:triangle_train_track}. We call the vertex $v$  in the interior of $f$ a \emph{switch} of~$\tau_f$.  The edges of~$\tau_f$ are called \emph{half-branches}. Each half-branch has one \emph{switch endpoint} and one \emph{edge endpoint}.
	\end{definition}
	\begin{figure}[h]
		\begin{center}
			\includegraphics[width=0.15\textwidth]{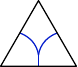}
			\put(-42,-10){large}
			\put(-75,26){small}
			\put(-11,26){small}
		\end{center}
		\caption{A triangular train track.}
		\label{fig:triangle_train_track}
	\end{figure}
	 A tangent line to $\tau_f$ at a switch $v$ distinguishes two \emph{sides} of $v$. Two half-branches  are on different sides of $v$ if an only if the path contained in $\tau_f$ which joins their edge endpoints is smooth. A switch~$v$ has one half-branch on one side and two on the other. We call the half-branch which is the unique half-branch on one side of $v$ the \emph{large half-branch} of $\tau_f$. The remaining two half-branches are called \emph{small half-branches} of~$\tau_f$.	
	The switch of $\tau_f$ determines a relation of the form 
	\begin{equation} e_0=e_1+e_2, \label{eq:a=b+c}\end{equation}
	between the three half-branches of $\tau_f$, 
	where $e_0$ denotes the large half-branch of $\tau_f$.
	We call this relation a \emph{switch equation} of $\tau_f$. 
	
	\begin{definition}
		A \emph{dual train track} in $(\mathcal{B}, F)$ is a finite graph $\tau\subset \mathcal{B}$ whose restriction to any ideal triangle $f$ of the ideal triangulation of $\mathcal{B}$ by $(F, E)$ is a triangular train track, which we denote by $\tau_f$. Every \emph{switch}\slash\emph{half-branch} of~$\tau$ is a \emph{switch}\slash \emph{half-branch} of $\tau_f$ for some $f \in~F$, respectively.  
	\end{definition}  
	Switch equations of triangular train tracks $\tau_f$ give rise to \emph{switch equations} of $\tau$ via identifying half-branches which share the same edge midpoint. Hence switch equations of $\tau \subset \mathcal{B}$ are relations between \emph{edges} of the triangulation $(\mathcal{T}, \alpha)$.
	\begin{definition}\label{defn:switch_equation}
		Let $\tau$ be a dual train track in $(\mathcal{B}, F)$ and let $f \in F$. Suppose \mbox{$e_0, e_1, e_2 \in E$} are embedded in the boundary of $f$ and that $e_0$ contains the edge endpoint of the large half-branch of $\tau_f$. 	
		We call the following relation between edges of~$\mathcal{T}$ 
		\[e_0=e_1+e_2\]
		a \emph{switch equation} of $\tau$ in $f$.
	\end{definition}

	\subsection*{The lower and upper tracks of a transverse taut triangulation}
	The transverse taut structure on a triangulation endows its horizontal branched surface $\mathcal{B}$ with a pair of dual train tracks which we call, following \cite[Definition 4.7]{SchleimSegLinks}, the \emph{lower} and \emph{upper}  tracks of~$\mathcal{B}$. 
	
	\begin{definition}Let $(\mathcal{T}, \alpha)$ be a transverse taut triangulation. Let $\mathcal{B}$ be the horizontal branched surface of $(\mathcal{T}, \alpha)$ equipped with the ideal triangulation $(F, E)$ determined by~$\mathcal{T}$. 
		
		The \emph{lower track} $\tau^L$ of $\mathcal{T}$ is the dual train track in $\mathcal{B}$ such that for every $f \in F$ the large-half branch of $\tau^L_f$ is dual to this edge of $f$ which is the top diagonal of the tetrahedron of $\mathcal{T}$ immediately below~$f$.
		
		The \emph{upper track} $\tau^U$ of $\mathcal{T}$ is the dual train track in $\mathcal{B}$ such that for every $f \in F$ the large-half branch of $\tau^U_f$ is dual to this edge of $f$ which is the bottom diagonal of the tetrahedron of $\mathcal{T}$ immediately above~$f$.
	\end{definition}
	We introduce the following names for the edges of $f \in F$ which are dual to large half-branches of $\tau_f^L$ or $\tau_f^U$.
	\begin{definition}
		Let $(\mathcal{T}, \alpha)$ be a transverse taut triangulation. We say that an edge in the boundary of $f \in F$ is the \emph{lower large} (respectively the \emph{upper large}) edge of $f$ if it contains the edge endpoint of the large half-branch of $\tau_{f}^L$ (respectively $\tau_{f}^U$).
	\end{definition}
	To define the lower and upper tracks we do not need a veering structure on the triangulation. However, in the case of veering triangulations we can figure out the lower and upper tracks restricted to the faces of a given tetrahedron $t$ without looking at the tetrahedra adjacent to $t$. Instead, the tracks are encoded by the colours of the edges of $t$; see Figure \ref{fig:lower_tt}. A more precise statement appears in the following lemma, which can be deduced from Lemma 3.2 of \cite{LMT}. We use it in the proof of Lemma \ref{lemma:linear_dependence}.
		\begin{figure}[h]
		\begin{center}
			\includegraphics[width=0.35\textwidth]{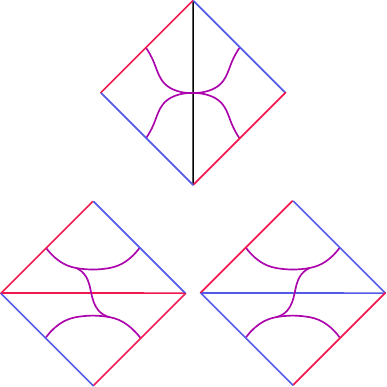} 
			\includegraphics[width=0.35\textwidth]{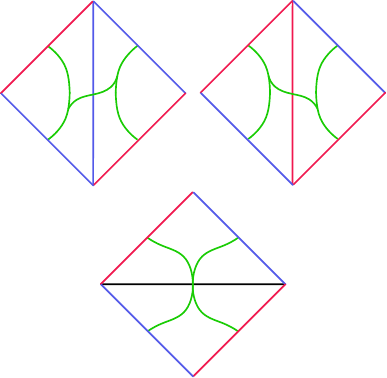} 
			\put(-300,5){(a)}
			\put(-140,5){(b)}
		\end{center}
		\caption{Squares in the top row represent top faces of a tetrahedron and squares in the bottom row represent bottom faces of a tetrahedron. (a) The lower track in a veering tetrahedron. In the bottom faces there are two options, depending on the colour of the bottom diagonal. 
			(b) The upper track in a veering tetrahedron. In the top faces there are two options, depending on the colour of the top diagonal.} 
		\label{fig:lower_tt}
	\end{figure}
	\begin{lemma}\label{lem:large_edges}
		Let $\mathcal{V} = (\mathcal{T}, \alpha, \nu)$ be a veering triangulation. Let $t$ be one of its tetrahedra. The lower large edges of the bottom faces of $t$ are the equatorial edges of $t$ which are of the same colour as the bottom diagonal of~$t$. The upper large edges of the top faces of $t$ are the equatorial edges of $t$ which are of the same colour as the top diagonal of~$t$. \qed
	\end{lemma}  
	The pictures of the lower and upper tracks in a veering tetrahedron are presented in Figure~\ref{fig:lower_tt}(a) and Figure~\ref{fig:lower_tt}(b), respectively.

	\begin{remark}\label{remark:changes_on_tracks}
		The operation $\nu \mapsto -\nu$ does not affect the lower and upper tracks as their definition does not depend on the 2-colouring on the veering triangulation. 
		The operation $\alpha \mapsto -\alpha$ interchanges the lower and upper track.
	\end{remark}
	
	\section{The maximal free abelian cover} \label{sec:Mab}
	The aim of this paper is to give algorithms to compute the taut, the veering and the Teichm\"uller polynomials. All of them are related to the \emph{maximal free abelian cover} $\Mab$ of  $M$.
	This covering space corresponds to the kernel of the homomorphism
	\[\pi_1(M)\rightarrow \bigslant{H_1(M;\zz)}{\text{torsion}}.\] 
	The deck group of the covering $\Mab \rightarrow M$ is isomorphic to
	\[H=\bigslant{H_1(M;\zz)}{\text{torsion}}.\]

	Let $r$ denote the rank of $H$. The integral group ring  on $H$  is isomorphic to the ring $\zz\lbrack u_1^{\pm 1}, \ldots, u_r^{\pm 1}\rbrack$ of Laurent polynomials. We denote it by $\zz \lbrack H \rbrack$. The multiplicative subgroup of Laurent monomials (with coefficient 1) in $\zz \lbrack H \rbrack$ is isomorphic to~$H$. If a basis $(b_1,\ldots, b_r)$ of~$H$ is fixed then we choose the isomorphism~$\Psi$ to be $b_i \mapsto u_i$.  For $u=(u_1, \ldots, u_r)$ the tuple of variables and $v =(v_1,\ldots,v_r)\in \zz^r$  by $u^v$ we denote the monomial $u_1^{v_1}\cdots u_r^{v_r}$. 	
	
	Suppose $M$ is equipped with a transverse taut triangulation $(\mathcal{T}, \alpha)$. Then a free abelian cover $\Mab$ admits a triangulation $\Tab$ induced by $\mathcal{T}$ via the covering map $\Mab\rightarrow M$.  It is also transverse taut, as  coorientations on triangular faces can be lifted from $\mathcal{T}$. If $\mathcal{T}$ is additionally veering, then so is $\Tab$. In this case we typically use the notation $\mathcal{V}$ for the triangulation of $M$ and $\Vab$ for the triangulation of $\Mab$. We orient ideal simplices of $\Tab$ in such a way that the restriction of $\Mab\rightarrow M$ to each simplex is orientation-preserving. 
	
	Ideal tetrahedra, triangles and edges of $\Tab$ are lifts of elements of $T, F , E$, respectively. They can be indexed by elements of $H$, hence we denote their sets by $H\cdot T, H\cdot F, H\cdot E$. The free abelian groups generated by $H\cdot T$, $H\cdot F$ and  $H\cdot E$ all admit a $\zz\lbrack H\rbrack$-module structure, via the obvious action of $H$ on them. Therefore we identify these groups with the free $\zz\lbrack H\rbrack$-modules $\zz\lbrack H\rbrack^T, \zz\lbrack H\rbrack^F, \zz\lbrack H\rbrack^E$, respectively.
	
	The lower and upper tracks of $\mathcal{T}$ induce the lower and upper tracks of~$\Tab$. We denote them by $\lowert$ and $\upert$, respectively. 
	
	\subsection{Labelling ideal simplices of  $\Tab$}\label{subsec:labelling}
	Let $(\mathcal{T}, \alpha)$ be a transverse taut triangulation of~$M$. By $(\Tab, \alpha^{ab})$ we denote the induced triangulation of~$\Mab$. Let $ \mathcal{F}$ be a connected fundamental domain for the action of $H$ on~$\Mab$ whose closure $ \overline{\mathcal{F}}$ is triangulated by lifts of tetrahedra $t\in T$. 
	We label the tetrahedra in $ \overline{\mathcal{F}}$ with $1\in H$. Every other tetrahedron in the  triangulation $\Tab$ is a translate of some $1\cdot t$ by an element $h\in H$, hence we denote it by $h\cdot t$. We also say that $h\cdot t$ has \emph{$H$-coefficient}~$h$. 
	
	For the remaining of the paper, unless stated otherwise, the labelling for the lower dimensional ideal simplices is as follows: 
	\begin{itemize}
		\item triangle $h\cdot f$, for $f \in F$, $h\in H$, is a top triangle of a tetrahedron with $H$-coefficient $h$,
		\item edge $h\cdot e$, for $e\in E$, $h\in H$, is a top diagonal of a tetrahedron with $H$-coefficient~$h$.
	\end{itemize}
	In other words, triangles and edges inherit their $H$-coefficient from the unique tetrahedron immediately below them. Once a basis for $H$ is fixed, we replace the $H$-coefficients of simplices with their \emph{Laurent coefficients}.	They are the images of $H$-coefficients  under the isomorphism~$\Psi$.
	
	\begin{remark*}
		Throughout the paper we use the multiplicative convention for~$H$.
	\end{remark*}
	Every triangle of $\Tab$ is a top triangle of  $h_1\cdot t_1$ and a bottom triangle of $h_2\cdot t_2$ for some $h_1, h_2 \in H$, $t_1,t_2 \in T$. Moreover, for all lifts of a triangle $f \in F$ the corresponding  product $h_2h_1^{-1}$ is the same. This motivates the following definition.
	\begin{definition}\label{defn:H-pairings}
		The	\emph{$H$-pairings} for $(\mathcal{T},\mathcal{F})$ are elements $h_i \in H$ associated to triangles $f_i \in F$ 	
		such that the tetrahedron immediately above $h\cdot f_i$ is in $h_ih\cdot \mathcal{F}$. We also say that $h_i$ is the \emph{$H$-pairing} of $f_i$ relative to $\mathcal{F}$. 
	\end{definition}
	A transverse taut triangulation $\Tab$ together with a fixed fundamental domain~$\mathcal{F}$  determine the $H$-pairings and, once a basis of $H$ is fixed, the \emph{face Laurents} --- their images under the isomorphism $\Psi$. 
	We, however, need to reverse this process. Namely, given a triangulation $\mathcal{T}$ of~$M$, we compute a tuple $\eta$ of $2n$ $H$-pairings,  which determines a consistent labelling of ideal simplices of $\Tab$ by elements of $H$. In this way we encode the whole infinite triangulation $\Tab$ with a pair of finite objects $(\mathcal{T}, \eta)$. This procedure is a subject of the next subsection.

	\subsection{Encoding the triangulation $\Tab$ by $(\mathcal{T},\eta)$}\label{subsec:encoding}
	We want to encode the triangulation $\Tab$ by $\mathcal{T}$ and a finite tuple of $H$-pairings associated to the triangles of $\mathcal{T}$. The latter depends on the chosen fundamental domain~$\mathcal{F}$ for the action of $H$ on $\Tab$. We fix it using the dual graph of $\mathcal{T}$.
	
	\subsection*{The dual 2-complex and the dual graph}
	Let $\mathcal{T}=(T,F, E)$ be an ideal triangulation. We use $\mathcal{D}$ to denote its dual complex. It is a 2-dimensional CW-complex: it has $n$ vertices, each corresponding to some $t\in T$, $2n$ edges, each corresponding to a triangular face $f \in F $, and~$n$ two-cells, each corresponding to an edge $e \in E$. 
	
	By $X$ we denote the 1-skeleton of $\mathcal{D}$. We call $X$ the \emph{dual graph} of~$\mathcal{T}$. Whenever~$\mathcal{T}$ is transverse taut, we assume that $X$ is endowed with the ``upward" orientation on edges, coming from the coorientation on the triangular faces of $\mathcal{T}$. 
	\subsection*{Fixing a fundamental domain}
	Suppose $\mathcal{T}$ is transverse taut. We orient the edges of the dual graph $X$ consistently with the transverse taut structure on $\mathcal{T}$.
	Let $Y$  be a spanning tree of $X$. Then $Y$ has $n$ vertices $t \in T$ and $n-1$ edges. 
	
	Since~$\mathcal{D}$ is a deformation retract of $M$, it has a free abelian cover~$\Dab$ with the deck transformation group isomorphic to $H$. Let $\Xab$ be the preimage of~$X$ under the covering map $\Dab \rightarrow \mathcal{D}$. Fix a lift~$\widetilde{Y}$ of $Y$ to $\Xab$. 
	The lift $\widetilde{Y}$ determines a fundamental domain $\mathcal{F}$ for the action of $H$ on $\Tab$  built from:
	\begin{itemize}
		\item the interiors of all tetrahedra of $\Tab$ dual to vertices of $\widetilde{Y}$,
		\item top diagonal edges of tetrahedra of $\Tab$ dual to vertices of $\widetilde{Y}$,
		\item the interiors of triangles of $\Tab$ dual to the edges  of $\Xab$ which join two vertices of $\widetilde{Y}$,
		\item the interiors of triangles of $\Tab$ dual to the edges of  $\Xab$ which run from a vertex of $\widetilde{Y}$ to a vertex not in $\widetilde{Y}$.
	\end{itemize}
	\begin{definition}\label{defn:fund_dom_tree}
		We say that $\mathcal{F}$ constructed as above is the  (upwardly closed) fundamental domain for the action of $H$ on $\Tab$ \emph{determined by the spanning tree} $Y$ of $X$. 
	\end{definition}
	The fundamental domain $\mathcal{F}$ is well-defined up to a translation by an element of $H$. 

	\subsection*{Finding $H$-pairings}
	A choice of the spanning tree $Y$ of the dual graph~$X$ not only determines the fundamental domain $\mathcal{F}$ for the action of $H$ on $\Tab$, but also gives an easy way to find a presentation for $\pi_1(\mathcal{D})$, and hence for~$H$, in terms of elements of~$F$. 
	
	We will be more general and consider a free abelian  quotient $H^C$ of $H_1(M;\zz)$. This generalisation will be used only in Section \ref{sec:Teich}.
	The group $H^C$ will be given to us in the following way. Let~$Y$  be a spanning tree of $X$. Let $F_Y$ be the subset of~$F$ consisting of triangles dual to the edges \emph{not} in $Y$. We call the elements of $F_Y$ the \emph{non-tree edges}, and elements of $F \bez F_Y$ -- the \emph{tree edges}.
	Recall the branch equations matrix 
	\[B:\zz^E\rightarrow \zz^F\]
	associated to a transverse taut triangulation  $(\mathcal{T},\alpha)$ of~$M$; see Subsection \ref{subsec:horizontal}. Let
	\[B_Y:\zz^E \rightarrow \zz^{F_Y}\] be obtained from~$B$ by deleting the rows corresponding to the tree edges.
	
	Let $\mathcal{D}_Y$ denote the 2-complex obtained from $\mathcal{D}$ by contracting $Y$ to a point. Then~$B_Y$ is the boundary map from the 2-chains to the 1-chains of $\mathcal{D}_Y$. Thus $H_1(\mathcal{D}_Y;\zz)$ is isomorphic to the cokernel of $B_Y$. Moreover
	\[H_1(M;\zz) \cong H_1(\mathcal{D};\zz)\cong H_1(\mathcal{D}_Y;\zz),\]
	where the first isomorphism follows from the fact that $\mathcal{D}$ is a deformation retract of~$M$ and the second --- from the fact that $\mathcal{D}_Y$ is homotopy equivalent to $\mathcal{D}$.
	
	Therefore 
	$H_1(M;\zz)$ is generated by $n+1$ non-tree edges $f_1, \ldots, f_{n+1}$ which satisfy~$n$ relations $r_1,\ldots, r_n$. 
	We add a finite collection~$C$ of additional relations $c_1, \ldots, c_k$. Each of them corresponds to a 1-cycle in $\mathcal{D}_Y$. This determines a group \[G=\langle f_1, \ldots, f_{n+1} \ | \ r_1, \ldots, r_n, c_1, \ldots, c_k \rangle.\] 
	Let $(B|C)_Y$ be the augmentation of the matrix $B_Y$ by the columns $c_1, \ldots, c_k$. The group $G$ is isomorphic to the cokernel of $(B|C)_Y$. 
	We set 
	\[H^C =  \bigslant{G}{\text{torsion}}.\]
	\begin{remark*}
		$H^C$-pairings encode a triangulation $\mathcal{T}^C$ of a free abelian cover of $M$ with the deck group isomorphic to $H^C$. We use them only in algorithm \texttt{Teichm\"ullerPolynomial}; see Subsection \ref{subsec:Teich_comp}.
	\end{remark*}
		Suppose that $H^C$ is of rank $r$. Let 
		\begin{equation}\label{eqn:smith}
			S= U(B|C)_Y V
		\end{equation} be the Smith normal form of $(B|C)_Y$. 		 
	  Let us denote the elements of $F_Y$ by $f_1, \ldots, f_{n+1}$. The matrix $U$ transforms the basis $(f_1,\ldots,f_{n+1})$ of $\zz^{F_Y}$ to another basis $(\mu_1,\ldots,\mu_{n+1})$. 
	  The last $r$ rows of both $S$ and $U(B|C)_Y$ are zero. 
	  In particular, $(\mu_{n-r+2},\ldots, \mu_{n+1})$ is a basis for $H^C$. The coefficients of $\mu_i$ as a linear combination of $\lbrace f_1,\ldots,f_{n+1} \rbrace$ are equal to the consecutive entries of the $i$-th column of $U^{-1}$. Therefore the last $r$ columns of the matrix $U^{-1}$ give a representation of the basis elements of  $H^C$ as 1-cycles in $\mathcal{D}_Y$.
		
		The consecutive entries of the $i$-th column of $U$ gives us the coefficients of $f_i$ as a linear combination of $(\mu_1, \ldots, \mu_{n+1})$. Since $\mu_1, \ldots, \mu_{n-r+1}$ are 0 in $H^C$ it follows that the last $r$ entries of the $i$-th column of $U$ correspond to the $H^C$-pairing of $f_i$ written with respect to the basis $(\mu_{n-r+2},\ldots, \mu_{n+1})$ of $H^C$.
	\begin{remark*}
		The $H^C$-pairings for $f \in F \bez F_Y$ (tree edges) are all trivial.
	\end{remark*}
	
	\subsection{Algorithm \texttt{FacePairings}}\label{subsec:alg_encoding}
	
	Recall that an ideal triangulation $\mathcal{T} = (T,F, E)$ of a 3-manifold determines the dual graph $X$ with $|T|$ vertices and $2\cdot |T|$ edges. In the following algorithm by $\texttt{SpanningTree}$ we denote an algorithm which takes as an input an ideal  triangulation $\mathcal{T} = (T,F, E)$ and returns a subset of $F$ consisting of triangles dual to the edges of a spanning tree~$Y$ of $X$. 
	
	\begin{algorithm}[H]
		\caption*{\textbf{Algorithm} \texttt{FacePairings} \\ Encoding the triangulation a free abelian cover}\label{alg:fund_domain}
		\begin{algorithmic}[1]
			\Statex 	\textbf{Input:} \begin{itemize}\item A transverse taut triangulation $(\mathcal{T},\alpha)$ of a cusped 3-manifold $M$ with $n$ ideal tetrahedra, $\mathcal{T} = (T,F,E)$
				\item A list $C$ of 1-cycles in the dual graph $X$
				\item Optional: return type = ``matrix''
			\end{itemize}
			\Statex 	\textbf{Output:}\begin{itemize}\item Default: a tuple  of $2n$ face Laurents encoding the triangulation $\mathcal{T}^C$ of a free abelian cover of $M$ with the deck group isomorphic to $H^C$
				\item If return type = ``matrix'': a pair $(U,r)$ where $r$ is the rank of $H^C$ and $U$ is as in \eqref{eqn:smith}
				\end{itemize}
			\State $B:=$  the branch equations matrix of $(\mathcal{T}, \alpha)$ \Comment{$2n\times n$ integer matrix}
			\State $B:=B$.AddColumns($C$)
			\State $Y := \texttt{SpanningTree}(\mathcal{T})$ 
			\State NonTree := $F - Y$
			\State $B:=B.$DeleteRows$(Y)$  \Comment{$(n+1)\times n$ integer matrix}
			\State $S,U,V:=\mathrm{SmithNormalForm}(B)$ \Comment{$S=UBV$}
			\State $r:=$ the number of zero rows of $S$ 
			\If{return type = ``matrix''}
			\State 	\Return $U, r$
			\EndIf
			\State $\eta :=$ the zero matrix with $r$ rows and columns indexed by elements of~$F$
			\For{$f$ in NonTree}	
			\State column $\eta(f) :=$ the last $r$ entries of the column $U(f)$
			\EndFor
			\State FaceLaurents$: = $ the tuple of zero Laurent polynomials, indexed by $F$
			\For{$f$ in $F$}
			\State FaceLaurents$(f):=u^{\eta(f)}$ \Comment{$u=(u_1, \ldots, u_r)$ and $u^v = u_1^{v_1}\cdots u_r^{v_r}$}
			\EndFor
			\State	\Return FaceLaurents
		\end{algorithmic}
	\end{algorithm}

	\FloatBarrier
	\subsection{Polynomial invariants of finitely presented $\ZH$-modules}\label{subsec:fitting}
	Both the taut and veering polynomials are derived from Fitting ideals of certain $\ZH$-modules associated to a 3-manifold~$M$. For this reason in this section we recall definitions of Fitting ideals and their invariants.
	
	Let $\mathcal{M}$ be a finitely presented module over $\zz \lbrack H \rbrack$. Then there exist integers $k, l \in \mathbb{N}$ and an exact sequence
	\[\zz\lbrack H \rbrack^k \overset{A}{\longrightarrow} \zz\lbrack H \rbrack^l \longrightarrow \mathcal{M} \longrightarrow  0\]
	of $\zz\lbrack H \rbrack$-homomorphisms called a \emph{free presentation} of $\mathcal{M}$. The matrix of $A$, written with respect to any bases of $\zz\lbrack H \rbrack^k$ and $\zz\lbrack H \rbrack^l$, is called a \emph{presentation matrix} for $\mathcal{M}$.
	\begin{definition} \cite[Section 3.1]{Northcott}
		Let $\mathcal{M}$ be a finitely presented $\ZH$-module with a presentation matrix $A$ of dimension $l\times k$. We define the $i$-th \emph{Fitting ideal} $\Fit_i(\mathcal{M})$ of~$\mathcal{M}$ to be the ideal in $\ZH$ generated by $(l-i)\times(l-i)$ minors of $A$. 
	\end{definition}
	\begin{remark*}
		Fitting ideals are called \emph{determinantal ideals} in \cite{Traldi} and \emph{elementary ideals} in \cite[Chapter VIII]{crow_fox}.
	\end{remark*}
	\begin{definition}
		Let $\mathcal{M}$ be a finitely presented $\ZH$-module. We define the $i$-th \emph{Fitting invariant} $\delta_i(\mathcal{M})$ of $\mathcal{M}$ to be the greatest common divisor of elements of $\Fit_i(\mathcal{M})$. When $\Fit_i(\mathcal{M}) = (0)$ we set $\delta_i(\mathcal{M})$ to be equal to 0.
	\end{definition}
	Note that Fitting invariants are well-defined only up to a unit in $\ZH$.
	\FloatBarrier
	\section{The taut polynomial} \label{sec:taut_poly}	
	Let $\mathcal{V}= ((T,F, E), \alpha, \nu)$ be a veering triangulation of a 3-manifold $M$. Recall that
	\[H= \bigslant{H_1(M;\zz)}{\text{torsion}}.\]
	Following \cite{LMT} we define the \emph{lower taut module} $\mathcal{E}^L_\alpha(\mathcal{V})$ of $\mathcal{V}$ by the presentation
	\begin{equation}\label{pres:taut} 
		\zz\lbrack H \rbrack^F \overset{D^L}{\longrightarrow} \zz\lbrack H \rbrack^E \longrightarrow  \module^L_\alpha(\mathcal{V}) \longrightarrow 0,
	\end{equation}
	where $D^L$ assigns to a triangle $1\cdot f\in H\cdot F$ the switch equation of $\lowert$ in $1\cdot f$ (recall Definition \ref{defn:switch_equation}). 	
	In other words, suppose face $f$ of $\mathcal{V}$ has  edges  $e_0, e_1, e_2 \in E$ in its boundary. Let $e_0$ denote the top diagonal of the tetrahedron immediately below $f$. Then its lift to $1\cdot f$ has the $H$-coefficient equal to 1. The lifts of the remaining edges $e_1, e_2$ of $f$ to $1\cdot f$ have $H$-coefficients $h_1, h_2 \in H$, respectively. We set
	\[D^L(1\cdot f) = 1\cdot e_0 - h_1\cdot e_1 - h_2 \cdot e_2.\]
	The \emph{lower taut polynomial} of $\mathcal{V}$, denoted by $\Theta_\mathcal{V}^L$, is the zeroth Fitting invariant of $\module^L_\alpha(\mathcal{V})$, that is
	\[\Theta_\mathcal{V}^L = \gcd \Big\lbrace n\times n \text{ minors of } D^L \Big\rbrace.\]
	
	We  can analogously define the \emph{upper taut module} $\module^U_\alpha(\mathcal{V})$ with the presentation matrix~$D^U$ which assigns to $1\cdot f\in H\cdot F$ the switch equation of~$\upert$ in $1\cdot f$. Then the \emph{upper taut polynomial} $\Theta_\mathcal{V}^U$ is the greatest common divisor of the maximal minors of~$D^U$.	
	
	\begin{remark*} The subscript $\alpha$ in $\module^L_\alpha$, $\module^U_\alpha$ reflects the fact that these modules depend only on the transverse taut structure $\alpha$ on $\mathcal{V}$, and not on the colouring~$\nu$. 	
		The reason why we consider only the taut polynomials of veering triangulations is that 	by \cite[Theorem 5.12]{LMT}  a veering triangulation $\mathcal{V}$ of~$M$ determines a unique (not necessarily top-dimensional, potentially empty) face of the Thurston norm ball in $H_2(M, \partial M;\rr)$. Hence its taut polynomials can be seen as invariants of this face. In fact, in Proposition~\ref{prop:lower=upper} we prove that the lower and upper taut polynomials of a veering triangulation are equal, so we get only one invariant.
		
		In contrast, transverse taut triangulations are very common and not canonical in any sense \cite[Theorem 1]{Lack_taut}. Moreover, many results about the taut polynomial that we prove here rely on the veering structure. For example, in Proposition \ref{prop:lower=upper} we use Lemma 2.1 and in Lemma \ref{lemma:linear_dependence} we use Lemma \ref{lem:large_edges}.
	\end{remark*}
	\begin{proposition}\label{prop:lower=upper}
		Let $\mathcal{V} = (\mathcal{T}, \alpha, \nu)$ be a veering triangulation. The lower taut module of $\mathcal{V}$ is isomorphic to the upper taut module of $\mathcal{V}$. Hence
		\[\Theta_{\mathcal{V}}^L = \Theta_\mathcal{V}^U\]
		up to a unit in $\ZH$.
	\end{proposition}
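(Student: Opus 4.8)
The plan is to construct an explicit isomorphism between $\module^L_\alpha(\mathcal{V})$ and $\module^U_\alpha(\mathcal{V})$ that respects the chosen bases of the $\ZH$-modules $\ZH^F$ and $\ZH^E$, up to multiplication of basis vectors by units (Laurent monomials). Since both modules are presented by an $n\times n$ matrix ($|F|=2n$ generators would be wrong --- wait, $|E|=n$ and $|F|=2n$, so $D^L, D^U$ are $n\times 2n$; the zeroth Fitting invariant is the gcd of the $n\times n$ minors), it suffices to find invertible diagonal change-of-basis matrices $P$ on $\ZH^F$ and $Q$ on $\ZH^E$, with entries that are units in $\ZH$, such that $Q \cdot D^L \cdot P = D^U$ after possibly permuting columns; equivalently, to match the switch equations of $\tau^{H,L}$ and $\tau^{H,U}$ face by face.

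First I would recall, using Corollary \ref{cor:large_edges}, exactly which edge of a triangle $f$ is the lower large edge and which is the upper large edge: the lower large edge of a bottom face of $t$ is the equatorial edge of $t$ of the same colour as the bottom diagonal of $t$, and dually for the upper large edge. The key combinatorial point is that for a \emph{single} face $f\in F$, its three boundary edges consist of one red pair-member, one blue pair-member, and... no: each triangle has exactly two edges of its own colour (Definition of red/blue triangle) and one of the opposite colour. I would analyze, for $f$ sitting between the tetrahedron $t_{\mathrm{below}}$ immediately below it and $t_{\mathrm{above}}$ immediately above it, how the lower large edge of $f$ (the top diagonal of $t_{\mathrm{below}}$) and the upper large edge of $f$ (the bottom diagonal of $t_{\mathrm{above}}$) sit among the three edges of $f$. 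By Lemma \ref{lemma:same_colour} these two distinguished edges are of the \emph{same} colour. So in the switch equation $e_0 = e_1 + e_2$ for $\tau^L$ in $f$, the large edge $e_0$ is one of the two same-coloured edges of $f$, and for $\tau^U$ in $f$ the large edge is the other one (or possibly the same, in which case the two switch equations literally coincide and there is nothing to do for that face).

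The heart of the argument is then the bookkeeping of $H$-coefficients. The triangle $1\cdot f$ has $H$-coefficient $1$ assigned from $t_{\mathrm{below}}$ (by the labelling convention of Subsection \ref{subsec:labelling}); its lower large edge (the top diagonal of $t_{\mathrm{below}}$) therefore also carries $H$-coefficient $1$, giving $D^L(1\cdot f)=1\cdot e_0 - h_1\cdot e_1 - h_2\cdot e_2$. For $D^U$ I would instead express the switch equation of $\tau^{H,U}$ in $1\cdot f$: now the distinguished (large) edge is the bottom diagonal of $t_{\mathrm{above}}$, which is $h_f\cdot e$ for the appropriate edge $e$, where $h_f$ is the $H$-pairing of $f$. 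Writing out all three edges of $1\cdot f$ with their $H$-coefficients and comparing, one sees that $D^U(1\cdot f)$ equals $D^L(1\cdot f)$ rescaled by a single Laurent monomial (coming from $h_f$ or its inverse, depending on which of the same-coloured edges is large for which track) together with a relabelling of which column of $\ZH^E$ a given edge is stored in --- but that relabelling is itself a rescaling by a Laurent monomial, because changing which lift of $e$ we call "$1\cdot e$" multiplies the corresponding column by a monomial. Assembling these per-face monomials into the diagonal matrix $P$ and the per-edge monomials into the diagonal matrix $Q$ gives $Q\,D^L P = D^U$ up to a permutation of rows/columns; since $P, Q$ are invertible over $\ZH$, the two presentation matrices present isomorphic modules, and hence have the same zeroth Fitting invariant up to a unit, proving $\Theta^L_\mathcal{V} = \Theta^U_\mathcal{V}$.

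\textbf{Main obstacle.} I expect the genuine difficulty to be the sign/orientation and $H$-coefficient bookkeeping: verifying that for \emph{every} face $f$ the two switch equations really do differ only by a global unit on that row and a consistent rescaling of the three edge-columns, with no residual discrepancy. One must be careful that the monomial needed to fix one column (edge) is forced to be the same across all faces containing that edge --- i.e. that the column rescalings $Q$ are globally consistent, not just locally. This is where Lemma \ref{lemma:same_colour} and Corollary \ref{cor:large_edges} do the real work: they guarantee that the local picture around each face is rigid enough that the global monomial assignment is well-defined. A secondary subtlety is the degenerate case when the lower and upper large edges of $f$ coincide (which happens exactly when $f$ lies between $t_{\mathrm{below}}$ and $t_{\mathrm{above}}$ in a way that the top diagonal of one equals the bottom diagonal of the other); there the row of $D^U$ equals that of $D^L$ outright, and one just needs to confirm the $H$-coefficients agree, which they do since both are read off the same tetrahedron-coefficient convention.
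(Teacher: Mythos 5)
Your overall framework is the right one and is in fact the paper's: exhibit diagonal, invertible matrices $P$ on $\ZH^F$ and $Q$ on $\ZH^E$ with $Q\,D^L\,P=D^U$, using Lemma \ref{lemma:same_colour} to see that the lower and upper large edges of a face $f$ are its two same-coloured (and necessarily distinct) edges. But the key computation is both missing and mis-stated, and as written the step would fail. Both $D^L(1\cdot f)$ and $D^U(1\cdot f)$ are relations among the \emph{same} three lifted boundary edges of the \emph{same} triangle $1\cdot f$, written in the \emph{same} fixed labelling of $H\cdot E$ (edge $h\cdot e$ is the top diagonal of the tetrahedron $h\cdot t$; this convention is never changed when passing from the lower to the upper track). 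So no $H$-pairing $h_f$, no monomial, and no ``relabelling of which lift we call $1\cdot e$'' enters anywhere. Writing $D^L(1\cdot f)=t-r-l$ with $t$ the lower large edge, $r$ the upper large edge (same colour as $t$) and $l$ the third edge, one has $D^U(1\cdot f)=r-t-l$. These two rows are \emph{not} related by a single scalar on the column of $f$: multiplying $t-r-l$ by any unit $\lambda$ gives $\lambda t-\lambda r-\lambda l$, which can never equal $r-t-l$ since the coefficients of $t$ and $r$ must flip sign while that of $l$ must not. The actual content of the proof is the sign pattern you never determine: negate the columns of $D^L$ indexed by (say) red triangles and the rows indexed by blue edges; a short check using the colour structure of a veering triangle (two edges of its own colour, one of the other) shows this turns $D^L$ into $D^U$ for faces of both colours, and the global consistency you worry about is automatic because each sign depends only on the colour of the corresponding simplex.

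Two smaller points. First, your ``secondary subtlety'' (the lower and upper large edges of $f$ coinciding) cannot occur: the proof of Lemma \ref{lemma:same_colour} notes that these are distinct edges of $f$, else $\mathcal{V}$ would not be veering; so that branch of your argument is vacuous. Second, your parenthetical correction at the start is right that $D^L$ is an $n\times 2n$ presentation matrix, but the conclusion you want is cleaner than ``same gcd of minors up to a unit'': once $Q\,D^L\,P=D^U$ with $P,Q$ invertible (here with $\pm1$ entries), the cokernels are isomorphic as $\ZH$-modules, and equality of the zeroth Fitting invariants up to a unit follows from their independence of the presentation.
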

	\begin{proof}	
		Let $1\cdot f \in H\cdot F$ be red. Then the tetrahedron immediately below $1\cdot f$ has a red top diagonal $t$ and the tetrahedron immediately above $1\cdot f$ has a red bottom diagonal~$r$, for some  $t,r \in H\cdot E$; see Lemma \ref{lemma:same_colour}.
		We have
		\begin{gather*}
			D^L(1\cdot f) = t-r-l\\D^U(1\cdot f) = r-t-l
		\end{gather*}
		for some $l\in H\cdot E$, so the signs of the two red edges of $f$ are interchanged. A similar statement is true for blue triangles: the images od $D^L$ and $D^U$ on them differ by swapping the signs of the two blue edges.
		
		If we multiply all columns of $D^L$ corresponding to red triangles of $\mathcal{V}$ by -1, and all rows corresponding to blue edges by $-1$, we obtain the matrix~$D^U$. Hence the maximal minors of $D^L$ and $D^U$ differ at most by a sign.
	\end{proof}
	Thus from now on we only write about the \emph{taut polynomial} $\Theta_\mathcal{V}$ and the taut module~$\module_\alpha(\mathcal{V})$. Throughout this section we use only the lower track.
	\begin{corollary}\label{cor:swaps_equal}
		The taut polynomials of $(\mathcal{T}, \alpha, \nu)$, $(\mathcal{T}, -\alpha, \nu)$, $(\mathcal{T}, \alpha, -\nu)$ and $(\mathcal{T}, -\alpha, -\nu)$ are equal.
	\end{corollary}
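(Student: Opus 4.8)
The plan is to chase the effect of the three basic symmetries $\alpha \mapsto -\alpha$, $\nu \mapsto -\nu$, $(\alpha,\nu)\mapsto(-\alpha,-\nu)$ on the taut module, reducing everything to Proposition~\ref{prop:lower=upper}. First I would dispose of $\nu \mapsto -\nu$: by Remark~\ref{remark:changes_on_tracks}, interchanging the colours does not affect the lower and upper tracks at all, and the taut module $\module_\alpha(\mathcal{V})$ was already observed to depend only on $\alpha$, not on $\nu$ (the subscript remark). Hence $\Theta_{(\mathcal{T},\alpha,\nu)} = \Theta_{(\mathcal{T},\alpha,-\nu)}$ and likewise $\Theta_{(\mathcal{T},-\alpha,\nu)} = \Theta_{(\mathcal{T},-\alpha,-\nu)}$, so it suffices to prove $\Theta_{(\mathcal{T},\alpha,\nu)} = \Theta_{(\mathcal{T},-\alpha,\nu)}$.

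Next I would handle $\alpha \mapsto -\alpha$. Again by Remark~\ref{remark:changes_on_tracks}, the operation $\alpha\mapsto-\alpha$ interchanges the lower and upper tracks. Therefore the \emph{lower} taut module of $(\mathcal{T},-\alpha,\nu)$ is, by definition, built from the switch equations of the lower track of $(\mathcal{T},-\alpha)$, which are exactly the switch equations of the upper track of $(\mathcal{T},\alpha)$; that is,
\[
\module^L_{-\alpha}(\mathcal{T},-\alpha,\nu) \;=\; \module^U_{\alpha}(\mathcal{T},\alpha,\nu).
\]
One subtlety to check carefully here is the labelling convention: swapping $\alpha\mapsto-\alpha$ swaps tops and bottoms of tetrahedra, so the rule ``edge $h\cdot e$ is the top diagonal of the tetrahedron with $H$-coefficient $h$'' picks out a different fundamental domain / different $H$-coefficients. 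I would argue that this only changes the presentation matrix by multiplying rows and columns by units of $\ZH$ (monomials), since passing to a translated fundamental domain multiplies the $H$-coefficient of every lift of a fixed simplex by a fixed element of $H$; such operations leave the ideal generated by the maximal minors unchanged. Thus $\Theta^L_{(\mathcal{T},-\alpha,\nu)} = \Theta^U_{(\mathcal{T},\alpha,\nu)}$ up to a unit.

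Finally, combine: by Proposition~\ref{prop:lower=upper} applied to the veering triangulation $(\mathcal{T},\alpha,\nu)$ we have $\Theta^U_{(\mathcal{T},\alpha,\nu)} = \Theta^L_{(\mathcal{T},\alpha,\nu)} = \Theta_{(\mathcal{V})}$ up to a unit. Stringing the equalities together gives $\Theta_{(\mathcal{T},\alpha,\nu)} = \Theta_{(\mathcal{T},-\alpha,\nu)} = \Theta_{(\mathcal{T},\alpha,-\nu)} = \Theta_{(\mathcal{T},-\alpha,-\nu)}$ up to a unit, which is the assertion. The main obstacle I anticipate is the bookkeeping in the previous paragraph: verifying that reversing the coorientation genuinely produces the \emph{upper} taut module of the original triangulation (not something merely isomorphic to it after an unexpected twist), and confirming that the change of fundamental domain / basis of $H$ only introduces unit factors, so that the zeroth Fitting invariant — which is only well-defined up to a unit anyway — is unaffected. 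Everything else is immediate from the two remarks and Proposition~\ref{prop:lower=upper}.
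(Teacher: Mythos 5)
Your argument is correct and follows exactly the paper's route: the paper's own proof is the one-liner ``This follows from Proposition~\ref{prop:lower=upper} and Remark~\ref{remark:changes_on_tracks},'' which is precisely the two ingredients you invoke. Your additional care about the relabelling of $H$-coefficients under $\alpha\mapsto-\alpha$ is sound (it amounts to left/right multiplication of the presentation matrix by monomial matrices and an overall translation of the fundamental domain, neither of which changes the zeroth Fitting invariant up to a unit) and fills in a detail the paper leaves implicit.
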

	\begin{proof}
		This follows from Proposition \ref{prop:lower=upper} and Remark \ref{remark:changes_on_tracks}. 
	\end{proof}
	
	\subsection{Reducing the number of relations}
	The original definition of the taut polynomial requires computing ${2n \choose n}> 2^n$ minors of $D^L$, which is an obstacle for efficient computation. However, the relations satisfied by the generators of the taut module are not linearly independent. In this subsection we give a recipe to systematically eliminate $n-1$ relations.
	
	The following lemma follows from \cite[Lemma 3.2]{LMT}. We include its proof, because it is important in Proposition \ref{prop:non-tree_generate}.
	
	\begin{lemma}\label{lemma:linear_dependence}
		Let $\mathcal{V} = (\mathcal{T}, \alpha, \nu)$ be a veering triangulation. Each tetrahedron $t \in T$ induces a linear dependence between the columns of $D^L$ corresponding to the triangles in the boundary of $t$.
	\end{lemma}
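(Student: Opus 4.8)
The plan is to make explicit the linear dependence coming from a single tetrahedron $t$ by pairing up its four faces according to the veering combinatorics, using Lemma \ref{lem:top_diagonals} and Lemma \ref{lem:bottom_diagonals} (equivalently Corollary \ref{cor:large_edges}) to identify precisely which edges of $t$ are the lower large edges of which faces. A tetrahedron $t$ has two top faces $f_1^+, f_2^+$ and two bottom faces $f_1^-, f_2^-$. Each of these four faces contributes a column of $D^L$, namely the switch equation of $\tau^L$ in that face. I want to show that an alternating sum of these four columns (with appropriate $H$-coefficients, i.e. multiplied by suitable Laurent monomials recording the lifts to a common tetrahedron) is zero in $\zz\lbrack H\rbrack^E$.

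First I would pass to a single lift: fix the lift $1\cdot t$ of $t$ and consider the six edges of $1\cdot t$ — its top diagonal $d^+$, its bottom diagonal $d^-$, and its four equatorial edges, two of which are the same colour as $d^+$ and two of which are the same colour as $d^-$. For each of the four faces $f$ of $t$, the switch equation $D^L(1\cdot f)$ (after translating so it refers to the edges of this fixed lift $1\cdot t$) expresses the lower large edge of $f$ as the sum of the other two edges of $f$. By Corollary \ref{cor:large_edges}, for the two bottom faces $f_1^-, f_2^-$ the lower large edges are the two equatorial edges of $t$ of the same colour as $d^-$, and $d^-$ itself appears with a minus sign in both switch equations. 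For the two top faces $f_1^+, f_2^+$, the relevant tetrahedron immediately below them is $t$ itself, so their lower large edge is $d^+$, which appears with a plus sign in both; their other two edges are the four equatorial edges (two per face, each equatorial edge shared by one top and one bottom face). Writing all six edge-variables out, I would add $D^L(f_1^+) + D^L(f_2^+)$ and subtract $D^L(f_1^-) + D^L(f_2^-)$ (each term carried to the lift $1 \cdot t$ by the appropriate monomial) and check that every equatorial edge cancels, $d^+$ contributes $+2$ from the top faces but must be matched — so in fact the right combination is likely $D^L(f_1^+) + D^L(f_2^+) = d^+ + d^+ + (\text{all four equatorials})$ while $D^L(f_1^-)+D^L(f_2^-) = (\text{two equatorials of colour } d^-) - 2d^- $... so I would instead look for the combination in which $d^+$ and $d^-$ do not appear, namely the difference of the two bottom-face equations together with the difference of the two top-face equations, or an integer linear combination thereof, chosen so the coefficient vector lies in the kernel of $D^L$. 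The bookkeeping is finite: six edges, four columns, so I just solve a small linear system over $\zz\lbrack H\rbrack$.

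The key step — and the place where I expect the real work to be — is getting the $H$-coefficients exactly right. Each switch equation $D^L(1\cdot f)$ is defined using the labelling convention of Subsection \ref{subsec:labelling} (edges and triangles inherit $H$-coefficients from the tetrahedron immediately below them), so the lifts of the edges of $t$ that appear in $D^L(1\cdot f_i^-)$ are naturally labelled by the tetrahedron below $f_i^-$, which is $t$ itself, whereas the lifts appearing in $D^L(1\cdot f_i^+)$ are labelled by $t$ only if the tetrahedron below $f_i^+$ is $t$ — which it is. So actually all four faces have $t$ as a neighbouring tetrahedron in the relevant direction, and the monomials bookkeeping should be mild; the subtlety is just tracking which of the two lifts of each equatorial edge (the one labelled by $t$ versus the one labelled by the tetrahedron on the other side) shows up, and multiplying the offending column by the face Laurent that corrects it. I would phrase the conclusion as: there is a vector $w_t \in \zz\lbrack H\rbrack^{\{f_1^+,f_2^+,f_1^-,f_2^-\}}$, with entries monomials up to sign, such that $D^L \cdot w_t = 0$; and distinct tetrahedra give linearly independent such vectors because each $w_t$ is supported on the four faces of $t$ and the top diagonal edge $d^+_t$ appears (say) with coefficient forcing independence — more simply, the supports, while overlapping, can be shown independent by looking at, for each $t$, its bottom diagonal, which is the bottom diagonal of no other tetrahedron. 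This independence is what lets Proposition \ref{prop:non-tree_generate} drop exactly $n-1$ relations (one dependence per tetrahedron, minus one global relation).

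Finally, I would remark that the analogous statement holds for $D^U$ by the symmetry $\alpha \mapsto -\alpha$ (Remark \ref{remark:changes_on_tracks}), so no separate argument is needed, and that the explicit form of $w_t$ will be recorded since Proposition \ref{prop:non-tree_generate} needs not just existence of a dependence but its precise shape to identify which single relation survives per tetrahedron.
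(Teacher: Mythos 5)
Your approach is essentially the same as the paper's: use Corollary \ref{cor:large_edges} to locate the lower large edges of the four faces of $t$, pair each top face $f_i$ with the bottom face $f_i'$ that meets it along the lower large edge of $f_i'$, and observe that $D^L(f_1+f_1')=D^L(f_2+f_2')$ is the single expression $d_t-d_b-s_1-s_2$, which gives your alternating-sum relation.

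Two corrections, however. First, in your $H$-coefficient bookkeeping you assert that for a bottom face $f_i^-$ of $t$ ``the tetrahedron below $f_i^-$ is $t$ itself'' --- this is backwards. The tetrahedron immediately \emph{above} $f_i^-$ is $t$; the tetrahedron immediately below $f_i^-$ is a different tetrahedron (the one whose top diagonal is the lower large edge of $f_i^-$). This is precisely why the bottom triangles of $1\cdot t$ in $\Vab$ are the translates $g_i\cdot f_i^-$ with $g_i$ generally $\neq 1$, and hence why the coefficients in $w_t$ are monomials rather than $\pm1$, as you had correctly anticipated. Your proposal to resolve this by solving a small linear system over $\zz\lbrack H\rbrack$ would work, but the paper avoids it by fixing the six edges of $1\cdot t$ as a single list of lifts and reading all four values of $D^L$ directly against that list, so the cancellation is a one-line check. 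Second, your aside about linear independence of the $w_t$ is not needed for this lemma and, as phrased, conflates support (the $w_t$ lie in $\zz\lbrack H\rbrack^F$, indexed by faces) with an argument about edges (distinctness of bottom diagonals); in the paper, Proposition \ref{prop:non-tree_generate} uses only the \emph{existence} of one relation per tetrahedron together with a spanning-tree pruning, not independence.
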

	\begin{proof}
		Suppose that $1\cdot t$ has red equatorial edges $r_1, r_2$, blue equatorial edges $l_1, l_2$, bottom diagonal $d_b$ and top diagonal $d_t$, where  $r_1, r_2, l_1, l_2,$ $d_b, d_t \in H\cdot E$; see Figure \ref{fig:veering_tetra_with_edges}.
		
		\begin{figure}[h]
			\begin{center}
				\includegraphics[width=0.215\textwidth]{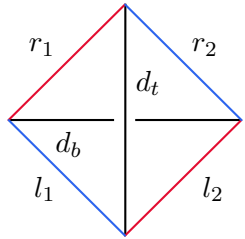}
				\put(-80,70){$r_1$}
				\put(-78,15){$l_1$}
				\put(-19,70){$r_2$}
				\put(-15,15){$l_2$}
				\put(-40,55){$d_t$}
				\put(-70,32){$d_b$}
			\end{center}
			\caption{Edges of the tetrahedron $1\cdot t$ of $\mathcal{V}^{ab}$. }
			\label{fig:veering_tetra_with_edges} 
		\end{figure}
		
		Let $f_1$, $f_2 \in H\cdot F$ be two top triangles of $1\cdot t$ such that
		\begin{gather*}
			D^L(f_1) = d_t-r_1-l_1\\
			D^L(f_2)=d_t-r_2-l_2.
		\end{gather*} 
		For $i=1,2$ denote by $f_i'$ the bottom triangle of $1\cdot t$ such that $f_i'$ and $f_i$ are adjacent in $1\cdot t$ along the lower large edge of $f_i'$. 
		
		By Lemma \ref{lem:large_edges} the lower large edges of $f_i'$ are the equatorial edges of $1\cdot t$ which are of the same colour as the bottom diagonal of $1\cdot t$; see also Figure~\ref{fig:lower_tt}.		
		Therefore
		\[D^L(f_1+f_1') = D^L(f_2+f_2') = d_t-d_b-s_1-s_2\]
		where $(s_1,s_2) = \begin{cases}
			(r_1,r_2)& \text{if } d_b \text{ is blue}\\
			(l_1, l_2)& \text{if } d_b \text{ is red}.\end{cases}$.
	\end{proof}
	\begin{remark*}
		Lemma \ref{lemma:linear_dependence} does not hold for transverse taut triangulations which do not admit a veering structure.  One can check that if the lower large edges of the bottom faces of a tetrahedron $t$ are not the opposite equatorial edges of~$t$, then no nontrivial linear combination of the images of the faces of $t$ under~$D^L$ gives zero.
	\end{remark*}

	Let $Y$ be a spanning tree of the dual graph $X$ of $\mathcal{V} = ((T, F, E), \alpha, \nu)$. Let $F_Y$ be the subset of triangles which are dual to the edges of $X$ which are \emph{not} in $Y$ (non-tree edges). We define a $\ZH$-module homomorphism
	\[D_Y^L :\zz\lbrack H \rbrack^{F_Y} \rightarrow \zz \lbrack H\rbrack^E\]
	obtained from $D^L$ by deleting the columns corresponding to the edges of $Y$.
	\begin{proposition}\label{prop:non-tree_generate}
		Let $\mathcal{V} = (\mathcal{T}, \alpha, \nu)$ be a veering triangulation and $Y$ be a spanning tree of its dual graph $X$. The image of $D^L$ and that of $D^L_Y$ are equal.
	\end{proposition}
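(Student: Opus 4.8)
The plan is to show that each column of $D^L$ that has been deleted --- that is, each $D^L(f)$ with $f \in F_Y$ --- already lies in the span of the remaining columns $D^L(f')$ with $f' \in F \setminus F_Y$. Since $Y$ is a spanning tree of the dual graph $X$, it has $n-1$ edges, so exactly $n-1$ triangles lie in $F_Y$, and we must eliminate these $n-1$ relations one at a time.

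The key tool is Lemma \ref{lemma:linear_dependence}: each tetrahedron $t \in T$ gives a linear relation $C^L(1\cdot t) = f_1 + f_1' - f_2 - f_2'$ among the four columns of $D^L$ indexed by the triangles in the boundary of $t$, where $f_1,f_2$ are the top triangles and $f_1',f_2'$ the bottom triangles of $1\cdot t$ (here, via the covering $\pi : \mathcal{V}^{ab} \to \mathcal{V}$, we read off which triangle $f \in F$ each lift projects to). First I would observe that these $n$ relations (one per tetrahedron) are the columns of the map $C^L : \ZH^T \to \ZH^F$ of Corollary \ref{cor:veering_chain_complex}, and that, up to relabelling, $C^L$ is exactly the boundary-to-chain incidence structure of the dual 2-complex $\mathcal{T}^\ast$: a tetrahedron $t$ corresponds to a vertex of $X$, a triangle $f$ to an edge of $X$, and the relation from $t$ records (with signs given by the coorientation, i.e.\ the upward orientation on $X$) which edges of $X$ are incident to that vertex, top ones counted positively and bottom ones negatively. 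In other words $C^L$ is, after forgetting the $H$-coefficients, essentially the transpose of the (signed) vertex--edge incidence matrix of the oriented graph $X$.

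Next, I would run the following standard spanning-tree argument. Orient $Y$ as a rooted tree and process its edges in order of decreasing distance from the root, so that we always handle a leaf edge of the not-yet-processed part of $Y$. If $e$ is such a leaf edge of $X$ incident to a degree-one vertex $v$ (degree one within the unprocessed subtree), then the tetrahedron-relation coming from $v$ expresses the column of $D^L$ indexed by the triangle dual to $e$ as a $\ZH$-linear combination of the columns indexed by the triangles dual to the other edges of $X$ at $v$; crucially, each of those other edges is either not in $Y$ at all, or has already been handled at an earlier step. Hence by induction every column $D^L(f)$ with $f \in F_Y$ is a $\ZH$-combination of columns $D^L(f')$ with $f' \in F \setminus F_Y$, so $\operatorname{im} D^L = \operatorname{im} D^L_Y$. (One must be a little careful with signs: the tetrahedron-relation has a $+1$ or $-1$ coefficient on the column dual to $e$ --- depending on whether $f$ is a top or a bottom triangle of the tetrahedron $t$ dual to $v$ --- and this unit coefficient is exactly what lets us solve for that column over $\ZH$ rather than just over its field of fractions.)

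The main obstacle, and the place where the veering hypothesis is genuinely used, is Lemma \ref{lemma:linear_dependence} itself --- without it there is no tetrahedron-relation to invert, as the Remark after that lemma points out. Granting the lemma, the only real work is bookkeeping: matching the combinatorics of the relations $C^L(1\cdot t)$ to the vertex--edge incidences of $X$ so that the spanning-tree induction is legitimate, and checking that the coefficient on the eliminated column is a unit in $\ZH$ (it is $\pm$ a Laurent monomial, since it is the $H$-coefficient of one of the four boundary triangles of $t$, which can always be normalized; in the labelling convention of Subsection \ref{subsec:labelling} the top triangles of $1\cdot t$ even have coefficient $1$, so one should set up the induction to solve for a top-triangle column whenever possible). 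I do not expect any difficulty beyond this.
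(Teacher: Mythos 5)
Your argument is correct and follows essentially the same route as the paper: prune leaf edges of $Y$ one layer at a time, using the tetrahedron relation of Lemma~\ref{lemma:linear_dependence} at each leaf vertex to solve for the corresponding tree column in terms of columns already known to lie in the span of $F \setminus F_Y$, the unit coefficient (a $\pm$ Laurent monomial) being what makes the elimination valid over $\ZH$. One parenthetical remark is inaccurate but harmless: $C^L$ is \emph{not} the incidence matrix of $X$ with the upward orientation, since in $C^L(1\cdot t)=f_1+f_1'-f_2-f_2'$ the two top faces $f_1,f_2$ carry opposite signs (as do the two bottom faces) rather than the same sign --- but your induction only uses that this relation is supported on the four edges of $X$ at $t$ with unit coefficients, which is true.
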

	\begin{proof}
		We say that a dual edge $h\cdot f \in H\cdot F$ is a \emph{linear combination of dual edges}  $h_{i_1}\cdot f_{i_1}, h_{i_2}\cdot f_{i_2}, \ldots, h_{i_k}\cdot f_{i_k} \in H\cdot F$, or \emph{in the span of these edges}, if $D^L(h\cdot f)$ is a linear combination with $\zz\lbrack H \rbrack$ coefficients of $D^L(h_{i_1}\cdot f_{i_1})$, $D^L(h_{i_2}\cdot f_{i_2}), \ldots, D^L(h_{i_k}\cdot f_{i_k})$. It is enough to prove that every tree edge is in the span of non-tree edges.
		
		By Lemma \ref{lemma:linear_dependence} each tree edge $h\cdot f \in H\cdot F$ is a linear combination of three dual edges that share a vertex with $h\cdot f$. In particular, the terminal edges of $Y$ --- there are at least two of them --- are in the span of non-tree edges. Now consider a subtree $Y'$  obtained from $Y$ by deleting its terminal edges. The terminal edges of $Y'$ can be expressed as linear combinations of non-tree edges and terminal edges of $Y$, hence as linear combinations of non-tree edges only. Since $Y$ is finite, we eventually exhaust all its edges.	 
	\end{proof}
	\begin{corollary}\label{cor:fast_comp}
		Let $Y$ be a spanning tree of the dual graph $X$ of a veering triangulation~$\mathcal{V}$. The taut polynomial $\Theta_\mathcal{V}$ is equal to the greatest common divisor of the maximal minors of the matrix $D_Y^L$.
	\end{corollary}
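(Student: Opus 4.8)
The plan is to recognise $D^L_Y$ as another presentation matrix for the taut module $\module_\alpha(\mathcal{V})$ and then appeal to the independence of Fitting ideals from the chosen free presentation.

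First I would note that, by definition, $\module_\alpha(\mathcal{V})$ is the cokernel of $D^L$, i.e.\ $\module_\alpha(\mathcal{V}) = \ZH^E/\im(D^L)$, so the quotient map $q\colon \ZH^E \to \module_\alpha(\mathcal{V})$ has kernel $\im(D^L)$. By Proposition~\ref{prop:non-tree_generate} we have $\im(D^L_Y) = \im(D^L)$ as submodules of $\ZH^E$; in particular $\ker q = \im(D^L_Y)$, so
\[\ZH^{F\bez F_Y} \overset{D^L_Y}{\longrightarrow} \ZH^E \overset{q}{\longrightarrow} \module_\alpha(\mathcal{V}) \longrightarrow 0\]
is exact. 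Thus $D^L_Y$ is itself a presentation matrix for $\module_\alpha(\mathcal{V})$. Since $|E| = n$ while $|F\bez F_Y| = 2n-(n-1) = n+1$, this matrix has dimension $n\times(n+1)$.

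Next I would read off the zeroth Fitting ideal from this presentation. By the definition of Fitting ideals recalled in Subsection~\ref{subsec:fitting}, $\Fit_0(\module_\alpha(\mathcal{V}))$ is the ideal of $\ZH$ generated by the $n\times n$ minors of $D^L_Y$, which are exactly its maximal minors since it has only $n$ rows. On the other hand $\Theta_\mathcal{V} = \delta_0(\module_\alpha(\mathcal{V}))$ is the greatest common divisor of the elements of $\Fit_0(\module_\alpha(\mathcal{V}))$, and this invariant does not depend on the choice of free presentation. Hence $\Theta_\mathcal{V}$ equals, up to a unit in $\ZH$, the greatest common divisor of the maximal minors of $D^L_Y$, as claimed.

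There is essentially no obstacle beyond bookkeeping: the one point that requires care is that deleting the columns indexed by $F_Y$ changes neither the target $\ZH^E$ nor the actual submodule $\im(D^L)\subseteq\ZH^E$ --- only the source --- which is precisely the content of Proposition~\ref{prop:non-tree_generate}. This is what makes $D^L_Y$ a presentation of the \emph{same} module with the \emph{same} generating set, so that the invariance of Fitting ideals applies directly; without the equality of images one would only know that $\ZH^E/\im(D^L_Y)$ is \emph{some} module, with no control over its Fitting ideals.
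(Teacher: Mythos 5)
Your proof is correct and follows essentially the same route as the paper: both invoke Proposition~\ref{prop:non-tree_generate} to see that $D^L_Y$ presents the same taut module, and then use the presentation-independence of Fitting invariants. Your version simply spells out the underlying exactness argument (equality of images as submodules of $\ZH^E$) that the paper leaves implicit.
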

	\begin{proof}
		By Proposition \ref{prop:non-tree_generate} we obtain another presentation for the taut module
		\[ \zz\lbrack H\rbrack^{F_Y}\overset{D_Y^L}{\longrightarrow} \zz\lbrack H \rbrack^E \longrightarrow \module^L_\alpha(\mathcal{V}) \longrightarrow 0.\]
		Since  Fitting invariants of a finitely presented module do not depend on a chosen presentation \cite[pp. 58]{Northcott}, the greatest common divisor of the maximal minors of $D_Y^L$ is equal to the taut polynomial.
	\end{proof}
	\subsection{Computing the taut polynomial}\label{subsec:computation}
	In this subsection we present pseudocode for an algorithm which takes as an input a veering triangulation~$\mathcal{V}$ and outputs the taut polynomial $\Theta_{\mathcal{V}}$ of $\mathcal{V}$.
	
	In Section \ref{sec:example} we follow algorithm \texttt{TautPolynomial} applied to the veering triangulation \texttt{cPcbbbiht\_12} of the figure eight knot complement.
	\begin{algorithm}[H]
		\caption*{\textbf{Algorithm} \texttt{TautPolynomial}\\Computation of the taut polynomial of a veering triangulation}\label{alg:veering_poly}
		\begin{algorithmic}[1]
			\Statex 	\textbf{Input:} 
				A transverse taut veering triangulation $\mathcal{V} = ((T,F, E),\alpha,\nu)$ of a cusped 3-manifold $M$
			\Statex 	\textbf{Output:} The  taut polynomial $\Theta_\mathcal{V}$  
			\State $\Lambda: = $ \texttt{FacePairings}($\mathcal{V}, \lbrack \ \rbrack$) \Comment{Face Laurents encoding $\Vab$}
			\State $D: = $ the zero matrix with rows indexed by $E$ and columns by $F$ 
			\For{$e$ in $E$} 
			\State CurrentCoefficient := 1	
			\State $L:=$ list triangles on the left  of $e$, ordered  from bottom to top
			\State $R:=$ list triangles on the right of $e$, ordered from bottom to top	
			\For {$A$ in $\lbrace L,R \rbrace$} \Comment{Counting from 1, not 0}
			\State add CurrentCoefficient to the entry $(e, A\lbrack 1 \rbrack)$ of $D$
			\For{$i$ from 2 to length($A$)} 
			\State CurrentCoefficient := CurrentCoefficient$\cdot (\Lambda(A[i-1]))^{-1}$
			\State subtract CurrentCoefficient from the entry $(e,A\lbrack  i\rbrack)$ of $D$
			\EndFor
			\State CurrentCoefficient := 1
			\EndFor
			\EndFor
			\State $Y := \texttt{SpanningTree}(Y)$
			\State $D_Y:= D.$DeleteColumns$(Y)$ \Comment{Accelerate the computation}
			\State minors := $D_Y$.minors$(|E|)$
			\State \Return $\gcd(\mathrm{minors})$
		\end{algorithmic}
	\end{algorithm}
	\FloatBarrier
	\begin{proposition}\label{prop:veering_output}
		The output  of \emph{\texttt{TautPolynomial}} applied to a veering triangulation $\mathcal{V}$ is equal to the taut polynomial $\Theta_{\mathcal{V}}$ of $\mathcal{V}$. 
	\end{proposition}
	\begin{proof}
		The output of \texttt{FacePairings}($\mathcal{V}, \lbrack \ \rbrack$) encodes the triangulation $\mathcal{V}^{ab}$. The matrix~$D$  on line 15 of the algorithm {\texttt{TautPolynomial}}$(\mathcal{V}, \lbrack \ \rbrack )$ is equal to the presentation matrix $D^L$ of the taut module $\mathcal{E}_\alpha(\mathcal{V})$; see \eqref{pres:taut}. This follows from the following observations:
		\begin{itemize}
			\item  $1\cdot f \in H\cdot F$ has $h\cdot e$ in its boundary, for some $e\in E$, $h \in H$, if and only if $h^{-1}\cdot f$ is attached to $1\cdot e$ (this explains why we invert face Laurents in line 10)
			\item $1\cdot e$ is the lower large edge of $1\cdot f$ if and only if $1\cdot f$ is one of the two lowermost triangles adjacent to $1\cdot e$ (this explains why we add coefficients in line 8 and subtract in line 11),
			\item if $h\cdot f$ is attached to $1\cdot e$, then the product of $H$-pairings of triangles attached to $1\cdot e$ below $h\cdot f$, on the same side of $1\cdot e$, is equal to $h$ (this explains line~10).			
			Here by the $H$-pairing of a triangle of $\Vab$ we mean the $H$-pairing of its image under the covering map $\Vab \rightarrow \mathcal{V}$.
		\end{itemize}

		Deleting the tree columns of $D$, for some spanning tree $Y$ of the dual graph $X$ of~$\mathcal{V}$, gives another presentation matrix for the taut module $\mathcal{E}_\alpha(\mathcal{V})$ by Corollary~\ref{cor:fast_comp}. The greatest common divisor of its maximal minors is equal to the zeroth Fitting invariant of $\mathcal{E}_\alpha(\mathcal{V})$, that is the taut polynomial $\Theta_\mathcal{V}$ of~$\mathcal{V}$.	
	\end{proof}
	\section{The veering polynomials}\label{sec:veering_poly}
	Let $\mathcal{V}= ((T,F, E), \alpha, \nu)$ be a veering triangulation of a 3-manifold $M$. We still use the notation  
	\[H=\bigslant{H_1(M;\zz)}{\text{torsion}}.\]
	In Subsection \ref{subsec:graphs} we follow \cite[Section 4]{LMT} to recall  the definition of the flow graph associated to~$\mathcal{V}$. In Subsection \ref{subsec:veer_poly} we follow \cite[Section 3]{LMT} to recall the definition of the veering polynomial of $\mathcal{V}$. 
	
	The aim of this section is twofold. First, based on a computer search we show examples of veering triangulations which lack the lower-upper track symmetry. More precisely, 
	the authors of \cite{LMT} define the \emph{veering polynomial} and the \emph{flow graph} as invariants associated to the \emph{upper} train track of $\mathcal{V}$. In the previous section we have shown that the taut polynomial derived from the upper track and that derived from the lower track are equal (Proposition \ref{prop:lower=upper}). In this section we show  that this does \emph{not} hold for the veering polynomial (Proposition \ref{prop:not_equal}). Similarly, the flow graphs derived from the lower and upper train tracks of $\mathcal{V}$ are not necessarily isomorphic (Proposition~\ref{prop:nonisom_graphs}).
	
	Since the entries in the veering census  have a  coorientation fixed only up to a sign, we cannot assign a unique veering polynomial to a veering triangulation from the veering census. Instead, we get a pair of veering polynomials. Similarly, we get a pair flow graphs. The lower veering polynomial/flow graph of $(\mathcal{T}, \alpha, \nu)$ is the upper veering polynomial/flow graph of $(\mathcal{T}, -\alpha, \pm\nu)$, and vice versa.
	
	The second aim of this section is to present pseudocode for the computation of the lower veering polynomial  (Subsection \ref{subsec:comp_big}).  
	
	\subsection{Flow graphs}\label{subsec:graphs}
	Given a veering triangulation $\mathcal{V} = (\mathcal{T}, \alpha, \nu)$ the authors of~\cite{LMT} defined the \emph{flow graph} $\Phi^U$ of $\mathcal{V}$ \cite[Subsection 4.3]{LMT}.
	The vertices of $\Phi^{U}$ are in a bijective correspondence with edges $e \in E$. Corresponding to each $t \in T$ there are three edges of $\Phi^{U}$: 
	\begin{itemize}
		\item from the bottom diagonal $d_b$ of $t$ to the top diagonal $d_t$ of $t$,
		\item from the bottom diagonal $d_b$ of $t$ to the equatorial edges $e, e'$ of~$t$ which have a different colour than the top diagonal of $t$.
	\end{itemize}
	In this paper we call the obtained graph the \emph{upper} flow graph of~$\mathcal{V}$, hence the superscript $U$. We analogously define the \emph{lower flow graph} $\Phi^L$. Its vertices also correspond to the edges of the the veering triangulation.  Every $t \in T$ determines the following three edges of~$\Phi^{L}$:
	\begin{itemize}
		\item from the top diagonal $d_t$ of $t$ to the bottom diagonal $d_b$ of $t$,
		\item from the top diagonal $d_t$ of $t$ to the equatorial edges $e, e'$ of $t$ which have a different colour than the bottom diagonal of $t$.
	\end{itemize} 
	
	Using a computer search allowed us to find that
	\begin{proposition}\label{prop:nonisom_graphs}
		There exists a veering triangulation $\mathcal{V}$ whose lower and upper flow graphs are not isomorphic. 	 
	\end{proposition}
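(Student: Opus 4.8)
The statement is purely existential, so the plan is to exhibit one explicit veering triangulation from the census \cite{VeeringCensus} and to certify that its lower and upper flow graphs are not isomorphic by means of a combinatorial invariant that can be read off directly from the triangulation data.

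First I would record the bookkeeping that makes the comparison tractable. For a veering triangulation $\mathcal{V}=((T,F,E),\alpha,\nu)$ each edge is the top diagonal of exactly one tetrahedron and the bottom diagonal of exactly one tetrahedron, so $t\mapsto d_t(t)$ and $t\mapsto d_b(t)$ are bijections $T\to E$. Consequently, in $\Phi^U$ every vertex $e$ has out-degree exactly $3$, namely the three arrows contributed by the unique tetrahedron $t$ with $d_b(t)=e$ (counted with multiplicity), and likewise every vertex of $\Phi^L$ has out-degree exactly $3$. Hence the out-degree sequence carries no information and one must look at in-degrees, or at finer data. Unwinding the definition in Subsection \ref{subsec:graphs} together with Lemma \ref{lem:top_diagonals}, Lemma \ref{lem:bottom_diagonals} and Corollary \ref{cor:triangles_of_same_colour}, the in-degree of a vertex $e$ in $\Phi^U$ is $1$ plus the number of side tetrahedra $t$ of $e$ with $\nu(d_t(t))\neq\nu(e)$, whereas in $\Phi^L$ it is $1$ plus the number of side tetrahedra $t$ of $e$ with $\nu(d_b(t))\neq\nu(e)$. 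These two counts depend on genuinely different local data, so there is no reason for the two in-degree multisets — nor for the two graphs — to agree.

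Next I would run the obvious finite search. Enumerate the census in increasing order of the number of tetrahedra; for each entry build the directed multigraphs $\Phi^L$ and $\Phi^U$ from the recipe in Subsection \ref{subsec:graphs}; and test them with a hierarchy of isomorphism invariants — first the multiset of in-degrees, and, should that fail to separate, the multiset of in-degrees of in-neighbours, the number of directed $2$-cycles, the decomposition into strongly connected components, and finally the characteristic polynomial of the adjacency matrix. The first census entry on which $\Phi^L$ and $\Phi^U$ differ for one of these invariants proves the proposition. Small, highly symmetric examples (such as the figure eight knot triangulation \texttt{cPcbbbiht\_12}, whose underlying manifold is amphichiral) are expected to have isomorphic flow graphs, so one must go a little further into the census, to a triangulation whose lower/upper symmetry is broken; in the final text I would then display that triangulation explicitly, draw both flow graphs, and point to the concrete certificate — for instance the two distinct in-degree multisets — that rules out an isomorphism.

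The main difficulty here is not mathematical but presentational: one wants an example small enough that both flow graphs, and the invariant separating them, can be written down and verified by hand, yet complex enough to actually destroy the lower–upper symmetry. Since deciding graph isomorphism is delicate in general, some care is needed to exhibit an unambiguous certificate of non-isomorphism; I would favour the in-degree multiset, backed up if necessary by the adjacency spectrum, since both are permutation invariants that are trivial to compare for the chosen finite example.
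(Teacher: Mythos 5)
Your overall plan—search the census, build both flow graphs explicitly, and certify non-isomorphism by a combinatorial invariant readable from the triangulation—is precisely what the paper does (its witness is \texttt{hLMzMkbcdefggghhhqxqkc\_1221002}, a seven-tetrahedron triangulation of \texttt{v2898}). However, your analysis of the candidate invariants contains a concrete error that would matter in practice.

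You correctly observe that every vertex of $\Phi^U$ and of $\Phi^L$ has out-degree $3$, and that the in-degree of $e$ in $\Phi^U$ is $1$ plus the number of side tetrahedra $t$ of $e$ with $\nu(d_t(t))\neq\nu(e)$, while in $\Phi^L$ it is $1$ plus the number of side tetrahedra with $\nu(d_b(t))\neq\nu(e)$. But you then conclude that ``these two counts depend on genuinely different local data, so there is no reason for the two in-degree multisets \ldots{} to agree.'' In fact they \emph{always} agree. If $t$ is an embedding of a side tetrahedron around $e$, then $e$ and $d_b(t)$ are two of the three edges of the bottom triangle $f$ of $t$ containing $e$; the third edge is another equatorial edge of $t$, so it has colour opposite to $\nu(e)$. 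Hence $\nu(d_b(t))=\nu(e)$ exactly when $f$ is coloured $\nu(e)$, and by Corollary~\ref{cor:triangles_of_same_colour} this happens precisely for the two lowermost side tetrahedra. Dually, $\nu(d_t(t))=\nu(e)$ precisely for the two uppermost side tetrahedra. Therefore both counts equal $\mathrm{deg}(e)-4$, so the in-degree of $e$ in \emph{each} flow graph is $\mathrm{deg}(e)-3$, and the in-degree multisets of $\Phi^L$ and $\Phi^U$ are literally equal for \emph{every} veering triangulation. Your primary certificate never separates the two graphs, and neither does any invariant that depends only on the degree sequence.

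Your fallback invariants (in-degrees of in-neighbours, adjacency spectrum, etc.)\ are indeed where the real work lives, and that is what the paper uses: for \texttt{v2898} the distinguishing datum is that in $\Phi^L$ two of the in-degree-$3$ vertices are adjacent to the unique in-degree-$7$ vertex, while in $\Phi^U$ only one is. So the plan is salvageable as written, but you should strike the claim that the in-degree multiset could ever distinguish the two graphs, and lead instead with a second-order invariant. A secondary remark: the reason to expect agreement for \texttt{cPcbbbiht\_12} is not amphichirality of the figure-eight knot complement but rather the existence of a combinatorial isomorphism $(\mathcal{T},\alpha,\nu)\cong(\mathcal{T},-\alpha,\pm\nu)$ (Remark~\ref{remark:changes_on_tracks} shows $\alpha\mapsto-\alpha$ swaps the lower and upper structures); this is what actually controls when $\Phi^L\cong\Phi^U$.
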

	\begin{proof} 
		The first entry of the veering census for which the upper and lower flow graphs are not isomorphic is the triangulation 
		\begin{center}$(\mathcal{T}, \pm \alpha, \pm \nu)$ = \texttt{hLMzMkbcdefggghhhqxqkc\_}\texttt{1221002}\end{center} of the manifold v2898. 
		
		The graphs are presented in Figure~\ref{fig:flow_graphs}. In \ref{fig:flow_graphs}(a)  there are two vertices of valency 6 (numbered with 4 and 6) which are joined to a vertex of valency~10 (numbered with 0),  while in \ref{fig:flow_graphs}(b) there is only one vertex of valency 6 (numbered with~6) which is joined to a vertex of valency~10 (numbered with 0). Hence the graphs are not isomorphic. 	\end{proof}
		\begin{figure}[h]
			\begin{center}
				\includegraphics[width=0.615\textwidth]{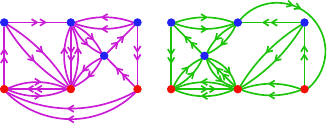} \vspace{1cm}  
				\put(-270,0){(a)}
				\put(-195,18){0}
				\put(-204,82){1}
				\put(-153,82){2}
				\put(-255,82){3}
				\put(-255,12){6}
				\put(-153,12){5}
				\put(-178,38){4}
				\put(-140,0){(b)}	
				\put(-74,12){0}
				\put(-75,82){4}
				\put(-13,82){2}
				\put(-126,82){3}
				\put(-126,12){6}
				\put(-22,12){5}
				\put(-101,58){1}
			\end{center}
			\caption{Flow graphs of \texttt{hLMzMkbcdefggghhhqxqkc\_1221002}. Double arrows join top diagonals to bottom diagonals of tetrahedra, or vice versa.}
			\label{fig:flow_graphs} 
		\end{figure}

	\subsection{Veering polynomials}\label{subsec:veer_poly}
	Let $\mathcal{V}= ((T,F, E), \alpha, \nu)$ be a veering triangulation. The matrix $D^L$ assigns to a tetrahedron a set of \emph{four} relations between its edges. By Lemma \ref{lemma:linear_dependence}, we can group triangles of a tetrahedron in pairs in such a way that $D^L$ evaluated on each pair equals
	\begin{equation}d_t-d_b-s_1-s_2, \label{eqn:big}\end{equation} where $d_t, d_b$ denote the top end the bottom diagonals of the tetrahedron, respectively, and $s_1, s_2$ --- its two equatorial edges of a different colour than~$d_b$. 
	
	Following \cite{LMT} we use this fact to define a $\zz\lbrack H\rbrack$-module homomorphism
	\begin{equation}\label{pres:veer}
		N^L:\zz\lbrack H \rbrack^T \rightarrow \zz\lbrack H \rbrack^E
		\end{equation}
	assigning to each tetrahedron of a veering triangulation $\Vab$ a linear combination of its edges of the form \eqref{eqn:big}. We call the cokernel of $N^L$ the \emph{lower veering module} and denote it by $\module^L_{\alpha, \nu}(\mathcal{V})$. The subscripts $\alpha, \nu$ reflect the fact that to define this module one needs both  the transverse taut structure $\alpha$  and the colouring $\nu$ on $\mathcal{V}$. 
	\begin{definition}
		The \emph{lower veering polynomial} $V^L_\mathcal{V}$ is the determinant of~$N^L$, that is the zeroth Fitting invariant of the lower veering module. 
	\end{definition}	
	Analogously the \emph{upper veering polynomial} $V^U_\mathcal{V}$ is the determinant of the map~$N^U$, which assigns to a tetrahedron with the top diagonal~$d_t$, the bottom diagonal $d_b$ and equatorial edges $w_1, w_2$ of a different colour than~$d_t$ a linear combination \[d_b-d_t-w_1-w_2.\]
	
	\begin{remark}\label{remark:not_up_to_a_unit}
		In \cite[Section 3]{LMT} the (upper) veering polynomial is well-defined as an element of $\ZH$, and not just up to a unit. This is accomplished by identifying a tetrahedron of $\Vab$ with its bottom diagonal. Then the map $N^U$ has $\ZH^E$ as both the domain and codomain. The upper veering polynomial $V^U_\mathcal{V}$ is then equal to the determinant of $N^U$, where the basis for  the domain and codomain is chosen to be the same.
		
		For the lower veering polynomial we identify a tetrahedron of $\Vab$ with its top diagonal.
		By our conventions for labelling ideal simplices of $\Vab$ (explained in Subsection~\ref{subsec:labelling}) under this identification the basis $1\cdot T$ for $\ZH^T$ and the basis $1\cdot E$ for $\ZH^E$ differ at most by a permutation.
	\end{remark}	
	Both veering modules have~$n$ generators and $n$ relations, hence their zeroth Fitting ideals are principal. This has obvious computational advantages. Moreover, the authors of~\cite{LMT} proved that the upper veering polynomial can be interpreted as the Perron polynomial of $\Phi^{U}$ (where the edges of $\Phi^{U}$ are labelled with certain elements of~$H$) \cite[Theorem~4.8]{LMT}. This allowed them to employ the results of McMullen on the Perron polynomials of graphs \cite[Section 3]{McMullen_Perron} to compute the growth rate of the closed orbits of the pseudo-Anosov flow associated to $\mathcal{V}$ in \cite{LMT_flow}. 
	\subsection{Computing the veering polynomials}\label{subsec:comp_big}
	In this subsection we present pseudocode for an algorithm which takes as an input a veering triangulation and returns its lower veering polynomial. 
	
	In Section \ref{sec:example} we follow this algorithm applied to the veering triangulation \texttt{cPcbbbiht\_12} of the figure eight knot complement.
	
	\begin{algorithm}[H]
		\caption*{\textbf{Algorithm} \texttt{LowerVeeringPolynomial}\\Computation of the lower veering polynomial}\label{alg:big_veering_poly}
		\begin{algorithmic}[1]
			\Statex 	\textbf{Input:} A transverse taut veering triangulation $\mathcal{V} = ((T,F, E),\alpha,\nu)$ of a cusped 3-manifold $M$
			\Statex 	\textbf{Output:} The lower veering polynomial $V^L_\mathcal{V}$ of $\mathcal{V}$
			\State permute elements of $T$ so that $E\lbrack i \rbrack$ is the top diagonal  of $T\lbrack i \rbrack$ 
			\State $\Lambda: = $ \texttt{FacePairings}$(\mathcal{V}, \lbrack \ \rbrack)$ \Comment{Face Laurents encoding $\Vab$}
			\State $N: = $ the zero matrix with rows indexed by $E$ and columns by $T$ 
			\For{$e$ in $E$}
			\State CurrentCoefficient := 1
			\State $L:=$ triangles on the left  of $e$, ordered  from bottom to top 
			\State $R:=$ triangles on the right of $e$, ordered from bottom to top
			\State $BT: = $ tetrahedron immediately below $L[1]$
			\State add CurrentCoefficient to the entry $(e, BT)$ of $N$
			\State $TT:= $ tetrahedron immediately above $L[\text{length}(L)]$
			\State TopCoefficient:=$\prod\limits_{i=1}^{\text{length}(L)} (\Lambda(L[i]))^{-1}$
			\State subtract TopCoefficient from the entry $(e,TT)$ of $N$	
			\For{$A$ in $\lbrace L,R\rbrace$}
			\For{$i$ from 1 to length($A$)$-1$} \Comment{Counting from 1, not 0}
			\State $T:=$ tetrahedron immediately above $A[i]$	
			\State CurrentCoefficient := CurrentCoefficient $\cdot (\Lambda(A[i]))^{-1}$
			\If {$i>1$} \label{code:skipping-first}
			\State subtract CurrentCoefficient from the entry
			$(e,T)$ of $N$
			\EndIf
			\EndFor
			\State CurrentCoefficient :=1
			\EndFor
			\EndFor
			\State \Return determinant of $N$
		\end{algorithmic}
	\end{algorithm}
	\begin{proposition} \label{prop:big_algorithm_correct}The output of \emph{\texttt{LowerVeeringPolynomial}} applied to a veering triangulation $\mathcal{V}$ is equal to the lower veering polynomial of $\mathcal{V}$.
	\end{proposition}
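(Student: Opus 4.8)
The plan is to show that the matrix $N$ produced on line 24 of \texttt{LowerVeeringPolynomial} equals the matrix of $N^L$ (with respect to the bases $1\cdot T$ and $1\cdot E$, after the permutation performed in line 1), so that its determinant is by definition the lower veering polynomial $V^L_\mathcal{V}$. Since \texttt{TriangulationCover}$(\mathcal{V},[\,])$ correctly encodes $\Vab$ by Proposition~\ref{prop:veering_output} (or rather the correctness of \texttt{TriangulationCover}, established via Lemma~\ref{lemma:can_find_fundamental_face_Laurents}), each face Laurent $\Lambda(f)$ records the $H$-pairing of $f$ relative to the chosen fundamental domain; the same bookkeeping identities used in the proof of Proposition~\ref{prop:veering_output} — that $1\cdot f$ has $h\cdot e$ in its boundary iff $h^{-1}\cdot f$ is attached to $1\cdot e$, and that the product of the face Laurents of the triangles below $h\cdot f$ on one side of $1\cdot e$ equals $h$ — will be reused here verbatim.

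The core of the argument is to compute the column of $N^L$ indexed by a tetrahedron $t$, i.e.\ the linear combination $d_t - d_b - s_1 - s_2$ of expression \eqref{eqn:big}, by summing the contributions the algorithm assigns to $t$ as $e$ ranges over $E$. First I would fix the tetrahedron $T[i]=t$ with top diagonal $d_t = E[i]$; the coefficient $+1$ placed on the diagonal entry $(d_t, t)$ in line 9 accounts for the $d_t$ term, since $t$ is the tetrahedron immediately below the lowermost triangle $L[1]$ around its own top diagonal $d_t$. Next, for the $-d_b$ term: $t$ is the tetrahedron immediately above the uppermost triangle on one side of its bottom diagonal $d_b$, and line~12 subtracts from $(d_b, t)$ the product of the inverses of the face Laurents of all triangles on that side of $d_b$ up to and including that uppermost one, which by the pairing identity above is exactly the $H$-coefficient needed so that the edge $d_b$ appears with the correct monomial relative to the labelling $d_t = 1\cdot(\text{top diagonal})$. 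Finally, for the two equatorial edges $s_1, s_2$ of colour different from $d_b$: by Corollary~\ref{cor:large_edges} (equivalently Lemma~\ref{lem:top_diagonals}) these are precisely the lower large edges of the bottom faces of $t$, so by Corollary~\ref{cor:triangles_of_same_colour} $t$ sits as a side tetrahedron — immediately above some non-lowermost triangle — around each of $s_1$ and $s_2$. When the inner loop of lines 14--21 processes that edge $e\in\{s_1,s_2\}$ and reaches the triangle $A[i]$ whose immediately-above tetrahedron is $t$ (with $i>1$, which is why line~\ref{code:skipping-first} skips $i=1$: the $i=1$ case would record a contribution to the \emph{lowermost} triangle's upper tetrahedron, which is instead handled as that tetrahedron's own $-d_b$ term, not as a $-s_j$ term here), it subtracts from $(e,t)$ the running product of inverse face Laurents, which again is exactly the $H$-coefficient aligning $e=s_j$ with the monomial normalisation $d_t=1$. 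Summing, the $t$-column of $N$ is $d_t - d_b - s_1 - s_2$ in the correct $\zz[H]$-coefficients, i.e.\ the $t$-column of $N^L$.

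The main obstacle I expect is the careful matching of $H$-coefficients — verifying that the ``running product of inverse face Laurents'' accumulated by the algorithm along one side of an edge $e$ coincides with the $H$-coefficient dictated by our labelling convention (Subsection~\ref{subsec:labelling}) on the lift $1\cdot f$ appearing in $D^L(1\cdot f)$, and hence on the grouped relation \eqref{eqn:big}. This amounts to checking that the two different normalisations — ``monomials inherited from the tetrahedron immediately below'' versus ``monomial $1$ on the top diagonal of the tetrahedron whose column we are computing'' — are reconciled correctly, and that the side-of-$e$ on which the algorithm walks is consistent with the side on which the relevant triangle of the tetrahedron lies; here the combinatorial input is Lemma~\ref{lemma:linear_dependence} (grouping the four relations of $t$ into two copies of \eqref{eqn:big}) together with Lemma~\ref{lemma:same_colour} and its Corollary~\ref{cor:triangles_of_same_colour}, which pin down exactly which triangles around $s_1, s_2$ have $t$ immediately above them. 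Once these identities are in place, that $N = $ matrix of $N^L$ follows, and $\det N = V^L_\mathcal{V}$ by definition.
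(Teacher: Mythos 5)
Your overall plan is correct and matches the paper's: show that the matrix $N$ assembled by the algorithm coincides with the matrix of $N^L$ (after the line-1 permutation), so that its determinant is $V^L_{\mathcal{V}}$ by definition, with the key combinatorial input being Corollary~\ref{cor:triangles_of_same_colour} to justify the $i>1$ test on line~\ref{code:skipping-first}. The paper argues row-by-row (for each $e$, which tetrahedra contribute) while you argue column-by-column (for each $t$, which edges appear); this is a presentational difference, not a mathematical one, and your $H$-coefficient bookkeeping for the $d_t$ and $d_b$ terms is sound.

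However there is a concrete error in your treatment of the equatorial edges. You write that $s_1,s_2$ ``are precisely the lower large edges of the bottom faces of $t$'' and cite Corollary~\ref{cor:large_edges}. That corollary says the opposite: the lower large edges of the bottom faces of $t$ are the equatorial edges of the \emph{same} colour as $d_b$, whereas by definition $s_1,s_2$ are the two equatorial edges of \emph{different} colour from $d_b$. These two pairs are disjoint. If one actually follows your claim, it inverts the conclusion: when $s_j$ is the lower large edge of a bottom face $f'$ of $t$, then $s_j$ is the top diagonal of the tetrahedron immediately below $f'$, so $f'$ is a \emph{lowermost} triangle around $s_j$ and $t$ is the \emph{lowermost} side tetrahedron of $s_j$ — precisely the case line~\ref{code:skipping-first} must \emph{exclude}, not include. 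The correct argument is the other direction: since $s_j$ has colour different from $d_b$, the bottom face $f'$ of $t$ containing $s_j$ has two edges of $d_b$'s colour (namely $d_b$ and one equatorial) and hence its colour equals that of $d_b$, which differs from the colour of $s_j$; then Corollary~\ref{cor:triangles_of_same_colour} says $f'$ is neither lowermost nor uppermost around $s_j$, so $t$ is a non-lowermost side tetrahedron of $s_j$ and is correctly picked up by the loop with $i>1$. Your ultimate conclusion is right, but the cited justification is backwards and needs to be replaced by this argument.
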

	\begin{proof}
		We claim that the matrix $N$ on line 23 of the algorithm is equal to $N^L$ given in \eqref{pres:veer}. Hence its determinant is equal to the lower  veering polynomial of~$\mathcal{V}$. The proof is similar to that of Proposition \ref{prop:veering_output}. The main difference here is that when we explore an edge $e \in E $ we do not take into account all tetrahedra attached to it, but only the one immediately below $e$, immediately above~$e$ and the  ones on the sides of $e$ which have a bottom diagonal of a different colour than~$e$. However, by Corollary~\ref{cor:triangles_of_same_colour} we know that the only~side tetrahedra of~$e$ which do not have a bottom diagonal of a different colour than $e$ are the two lowermost side tetrahedra of $e$. This explains line~\ref{code:skipping-first} of \texttt{LowerVeeringPolynomial}.
		
		Line 1 of the \texttt{LowerVeeringPolynomial} ensures that the polynomial is correctly computed not only up to a sign; see Remark \ref{remark:not_up_to_a_unit}.
	\end{proof}
	An analogous algorithm can be written for the upper veering polynomial. Alternatively,   by Remark \ref{remark:changes_on_tracks} to compute the upper veering polynomial of $(\mathcal{T}, \alpha, \nu)$ we can apply \texttt{LowerVeeringPolynomial}  to the triangulation $(\mathcal{T}, -\alpha, -\nu)$. 
	\begin{remark*}Let $\mathcal{V} = (\mathcal{T}, \alpha, \nu)$ and $-\mathcal{V} = (\mathcal{T}, -\alpha, -\nu)$.	
		The outputs of \texttt{Lower\linebreak VeeringPolynomial}$(\mathcal{V})$ and \texttt{LowerVeeringPolynomial}$(-\mathcal{V})$ are equal to~$V^L_\mathcal{V}$ and $V^U_{\mathcal{V}}$, respectively, but written with respect to \emph{different} bases of $H$. The change of basis is given by $u_i \mapsto u_i^{-1}$. This follows from Definition \ref{defn:H-pairings}. Moreover, \[V^L_{-\mathcal{V}}(u_1^{-1}, \ldots, u_r^{-1}) =  V^U_\mathcal{V}(u_1, \ldots, u_r)\] only up to a unit; compare with Remark \ref{remark:not_up_to_a_unit}.
	\end{remark*}
	
	Using an implementation of \texttt{LowerVeeringPolynomial} we have found that
	\begin{proposition}\label{prop:not_equal}
		There exists a veering triangulation $\mathcal{V}$ for which $V^L_\mathcal{V}$ and $V^U_\mathcal{V}$ are not equal up to a unit in $\ZH$.
	\end{proposition}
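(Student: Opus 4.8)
The statement is an existence claim, so the natural strategy is to exhibit an explicit veering triangulation from the census for which the two polynomials differ, and verify the difference by direct computation. The plan is to run the implementation of \texttt{LowerVeeringPolynomial} on every entry of the veering census (with both choices of coorientation, i.e.\ on $(\mathcal{T},\alpha,\nu)$ and on $(\mathcal{T},-\alpha,-\nu)$), compare the two outputs up to multiplication by a unit of $\ZH$, and record the first example where they disagree. Concretely, I would first observe that by Remark~\ref{remark:changes_on_tracks} the upper veering polynomial $V^U_{\mathcal{V}}$ equals the output of \texttt{LowerVeeringPolynomial} applied to $(\mathcal{T},-\alpha,-\nu)$, so a single algorithm suffices to obtain both invariants of $\mathcal{V}$. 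By Proposition~\ref{prop:big_algorithm_correct} these outputs really are $V^L_{\mathcal{V}}$ and $V^U_{\mathcal{V}}$.

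The main subtlety is that equality ``up to a unit in $\ZH$'' must be tested carefully: units of $\zz\lbrack H\rbrack = \zz\lbrack u_1^{\pm1},\dots,u_r^{\pm1}\rbrack$ are exactly $\pm u^v$ for $v\in\zz^r$, and moreover (as noted in the Remark following Proposition~\ref{prop:big_algorithm_correct}) the two runs of the algorithm naturally return their answers in bases of $H^{ab}$ that differ by $u_i\mapsto u_i^{-1}$, since $H$-pairings are defined relative to a choice of fundamental domain. So before comparing, one must put $V^L_{\mathcal{V}}$ and $V^U_{\mathcal{V}}$ in a common basis and then check whether their ratio — or the ratio of suitable normal forms (e.g.\ normalising so that the lexicographically least monomial is $1$ with positive coefficient) — is a monomial times $\pm1$. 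The cleanest certificate of inequality is to compare the \emph{sets of coefficients} (the Newton polytope together with the multiset of coefficients, which are unit-invariant): if these differ, the polynomials cannot agree up to a unit. Alternatively one can specialise all $u_i$ to a common variable or to numerical values and compare the resulting one-variable polynomials up to $\pm t^k$.

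I would then present the first census triangulation witnessing the phenomenon, display $V^L_{\mathcal{V}}$ and $V^U_{\mathcal{V}}$ explicitly (in a fixed basis of $H^{ab}$), and point to a basis-independent discrepancy — for instance a difference in the number of terms, in the coefficient multiset, or in the shape of the Newton polytope — which immediately shows that no unit of $\ZH$ can carry one to the other. It is worth double-checking against Proposition~\ref{prop:nonisom_graphs}: the triangulation \texttt{hLMzMkbcdefggghhhqxqkc\_1221002} of v2898 already has non-isomorphic lower and upper flow graphs, and since by \cite[Theorem~4.8]{LMT} the veering polynomial is recovered from the Perron polynomial of the flow graph, that triangulation is the natural first candidate to check for Proposition~\ref{prop:not_equal} as well.

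The step I expect to be the main obstacle is not conceptual but bookkeeping: correctly reconciling the two bases of $H^{ab}$ coming from the two algorithm runs, and being confident that an apparent difference is genuine rather than an artefact of basis choice or of the inherent ambiguity (unit, and — for the lower polynomial — the permutation between $1\cdot T$ and $1\cdot E$ discussed in Remark~\ref{remark:not_up_to_a_unit}). Reducing to a basis-invariant quantity such as the coefficient multiset or the Newton polytope sidesteps this entirely, and that is how I would phrase the final verification.
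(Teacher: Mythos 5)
Your approach is correct and essentially the same as the paper's: a computer search over the census running \texttt{LowerVeeringPolynomial} on both $(\mathcal{T},\alpha,\nu)$ and $(\mathcal{T},-\alpha,-\nu)$, followed by exhibiting the first example on which the two outputs differ up to a unit; the paper does exactly this and writes the two polynomials out explicitly (in its example $H^{ab}\cong\zz$, so they are one-variable and the discrepancy is immediate from the factorisations). One caveat about your candidate guess: the paper explicitly notes that \texttt{hLMzMkbcdefggghhhqxqkc\_1221002} (v2898) does \emph{not} work --- its lower and upper veering polynomials are both $(1+u)(1-20u+u^2)$ up to a unit even though its flow graphs are non-isomorphic --- so the heuristic ``non-isomorphic flow graphs $\Rightarrow$ distinct veering polynomials'' fails, because the Perron polynomial depends on the $H$-labels decorating the edges of the flow graph rather than on the underlying abstract graph; the paper's actual first witness is \texttt{iLLLAQccdffgfhhhqgdatgqdm\_21012210} (a triangulation of t10133).
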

	\begin{proof}
		The first entry $(\mathcal{T}, \pm \alpha, \pm \nu)$ of the veering census for which (after fixing the signs for $\alpha, \nu$) we have
		\[V^L_\mathcal{V} \neq V^U_\mathcal{V}\]
		in $\bigslant{\ZH}{H}$ is $(\mathcal{T}, \pm \alpha, \pm \nu)$ =\texttt{iLLLAQccdffgfhhhqgdatgqdm\_21012210}. This is a veering triangulation of the 3-manifold t10133. Its lower and upper veering polynomials are up to a unit equal to
		\begin{align*}(1-u)^{-1}&(1-u^{25})(1-u^{13})\\&(1-u+u^2-u^3+u^4-u^5+u^6)(1-u^2-u^7-u^{12}+u^{14})\end{align*}
		and
		\begin{align*}(1-u)^{-1}&(1-u^{29})(1-u^9)\\&(1-u+u^2-u^3+u^4-u^5+u^6)(1-u^2-u^7-u^{12}+u^{14}).\end{align*}
		
		Their greatest common divisor \[(1-u+u^2-u^3+u^4-u^5+u^6)(1-u^2-u^7-u^{12}+u^{14})\]is equal to the taut polynomial $\Theta_{\mathcal{V}}$ of $\mathcal{V}$.
	\end{proof}
	\begin{remark}The flow graphs of the triangulation from the proof of Proposition \ref{prop:not_equal} are not isomorphic. In fact, one of them is planar, and the other is not. 
		\end{remark}
	\begin{remark} Recall the veering triangulation 
		\begin{center}$(\mathcal{T}, \pm \alpha, \pm \nu)$ = \texttt{hLMzMkbcdefggghhhqxqkc\_}\texttt{1221002}\end{center} of the manifold v2898. 
		In the proof of Proposition \ref{prop:nonisom_graphs} we showed that its upper and lower flow graphs are not isomorphic. 		
		However, its lower and upper veering polynomial are both (up to a unit) equal to \[(1+u)(1-20u+u^2).\] 
		\end{remark}
	
	There are even veering triangulations for which one veering polynomial vanishes and the other does not.
	\begin{example}\label{ex:veering_vanishes}
		We consider the triangulation \[(\mathcal{T}, \pm \alpha, \pm \nu) = \texttt{lLLLAPAMcbcfeggihijkktshhxfpikaqj\_20102220020}.\]
		
		With some choice of signs on $\alpha, \nu$ we have
		\[V^L_\mathcal{V} = (u - 1)^2  (u + 1)^3  (u^2 - u + 1)  (u^4 + 1)\]
		up to a unit and
		\[V^U_\mathcal{V} = 0.\]
	\end{example}
	\begin{remark}\label{remark:when_vanishes}
		By the results of Landry, Minsky and Taylor the taut polynomial divides the upper veering polynomial \cite[Theorem 6.1 and Remark~6.18]{LMT} and hence also the lower veering polynomial. The remaining factors of the lower/upper veering polynomial are related to a special family of \mbox{1-cycles} in the dual graph of the veering triangulation, called the lower/upper \emph{AB-cycles} \cite[Section 4]{LMT}. We refer the reader to \cite[Subsection~6.1]{LMT} to find out the formula for the extra factors.
		
		If for a veering triangulation $\mathcal{V}$ we have that $\Theta_\mathcal{V} \neq 0$ and $V^{L/U}_\mathcal{V} = 0$, then~$\mathcal{V}$ has a lower/upper AB-cycle of even length whose homology class in $H$ is trivial.	
		From Proposition \ref{prop:not_equal} it follows that  the homology classes of the lower and upper AB-cycles are not always paired so that one is the inverse of the other.
	\end{remark}

	\section{Example: the veering triangulation of the figure eight knot complement} \label{sec:example}
	In this section we follow algorithms \texttt{TautPolynomial} and \texttt{LowerVeering} \texttt{Polynomial} on the veering triangulation \texttt{cPcbbbiht\_12} of the figure eight knot complement. 
	\subsection{Triangulation of the maximal free abelian cover}
	Let $\mathcal{V}$ denote the veering triangulation \texttt{cPcbbbiht\_12} of the figure eight knot complement. First we  follow the algorithm \texttt{FacePairings} in order to encode the triangulation $\Vab$ of the maximal free abelian cover of the figure eight knot complement.
	
	We find the branch equations matrix $B$ for $\mathcal{V}$. Figure \ref{fig:m004_edges} shows triangles attached to the edges $e_0, e_1$ of $\mathcal{V}$. Using this we see that
	\[B = \begin{blockarray}{l c c}
		&  e_0   &   e_1      \\
		\begin{block}{l [c c]}
			f_0 & 0 & 1 \\
			f_1 & 1 & 0 \\
			f_2 & 0 & -1 \\
			f_3 & -1 & 0 \\
		\end{block}
	\end{blockarray}.\]
	\begin{figure}[h]
		\begin{center}
			\includegraphics[width=0.525\textwidth]{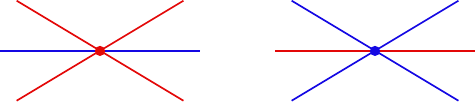}
			\put(-178,-15){\fbox{$e_0$}}
			\put(-53,-15){\fbox{$e_1$}}
			\put(-174,5){$t_1$}
			\put(-174,35){$t_0$}
			\put(-205,12){$t_0$}
			\put(-205,29){$t_1$}
			\put(-143,12){$t_0$}
			\put(-143,29){$t_1$}
			\put(-48,5){$t_0$}
			\put(-48,35){$t_1$}
			\put(-79,12){$t_1$}
			\put(-79,29){$t_0$}
			\put(-17,12){$t_1$}
			\put(-17,29){$t_0$}
			\put(-220,-5){$f_0$}
			\put(-228,20){$f_1$}
			\put(-220,45){$f_2$}
			\put(-131,-5){$f_2$}
			\put(-123,20){$f_3$}
			\put(-131,45){$f_0$}
			\put(-94,-5){$f_1$}
			\put(-102,20){$f_0$}
			\put(-94,45){$f_3$}
			\put(-6,-5){$f_3$}
			\put(2,20){$f_2$}
			\put(-6,45){$f_1$}
		\end{center}
		\caption{Cross-sections of the neighbourhoods of the edges $e_0$, $e_1$ of $\mathcal{V}$. The colours of edges and triangles are indicated.}
		\label{fig:m004_edges} 
	\end{figure}
	
	Using Figure \ref{fig:m004_edges} we can draw the dual graph $X$ of $\mathcal{V}$. It is presented in Figure~\ref{fig:m004_dual_graph}. As a spanning tree $Y$ of~$X$ we choose $\lbrace f_0 \rbrace$.
	\begin{figure}[h]
		\begin{center}
			\includegraphics[width=0.133\textwidth]{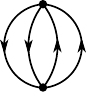}
			\put(-30,-10){$t_0$}
			\put(-30,62){$t_1$}
			\put(-67,28){$f_0$}
			\put(-49,28){$f_2$}
			\put(-15,28){$f_1$}
			\put(2,28){$f_3$}
		\end{center}
		\caption{The dual graph of \texttt{cPcbbbiht\_12}.}
		\label{fig:m004_dual_graph} 
	\end{figure}
	The matrix $B_Y$ is obtained from $B$  by deleting its first row, corresponding to $f_0$. Let $S$ be the Smith normal form $S$ of $B_Y$. It satisfies $S = UB_Y V$, where
	\[U = \begin{blockarray}{l c c c }
		&  f_1   &   f_2 & f_3 \\
		\begin{block}{l [c c c]}
			&-1 & 0 & 0\\
			&0 & 1 & 0 \\
			&1 & 0 & 1\\
		\end{block}
	\end{blockarray}.\]
	Since $S$ is of rank 2, the face Laurents of the non-tree edges are determined by the last row of $U$. All face Laurents for $\mathcal{V}$  relative to the fundamental domain determined by $Y = \lbrace f_0 \rbrace$ are listed in Table \ref{tab:faceLaur}.
	
	\begin{table}[h]
		\begin{tabular}{|c|c|c|c|c|}
			\hline
			face&	$f_0$ & $f_1$ & $f_2$ & $f_3$\\\hline
			face Laurent&	1& $u$ & 1 & $u$\\\hline
		\end{tabular}
		\caption{The face Laurents encoding $\Vab$.}
		\label{tab:faceLaur}
	\end{table}
	
	Using Table \ref{tab:faceLaur} and Figure \ref{fig:m004_edges} we draw the triangles and the tetrahedra attached to the edges $1\cdot e_0$ and $1\cdot e_1$ of $\Vab$ in Figure \ref{fig:m004_edges_upstairs}.
	\begin{figure}[h]
		\begin{center}
			\includegraphics[width=0.657\textwidth]{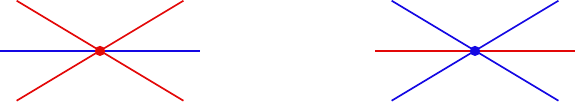}
			\put(-235,-15){\fbox{$1\cdot e_0$}}
			\put(-59,-15){\fbox{$1\cdot e_1$}}
			\put(-232,5){$1\cdot t_1$}
			\put(-233,35){$u\cdot t_0$}
			\put(-268,12){$1\cdot t_0$}
			\put(-268,29){$u\cdot t_1$}
			\put(-199,12){$1\cdot t_0$}
			\put(-199,29){$u\cdot t_1$}
			\put(-56,5){$1\cdot t_0$}
			\put(-61,35){$u^2\cdot t_1$}
			\put(-95,12){$u\cdot t_1$}
			\put(-95,29){$u\cdot t_0$}
			\put(-22,12){$u\cdot t_1$}
			\put(-22,29){$u\cdot t_0$}
			\put(-283,-10){$1\cdot f_0$}
			\put(-295,20){$1\cdot f_1$}
			\put(-288,50){$u\cdot f_2$}
			\put(-186,-10){$1\cdot f_2$}
			\put(-173,20){$1\cdot f_3$}
			\put(-186,50){$u\cdot f_0$}%
			\put(-108,-10){$1\cdot f_1$}
			\put(-121,20){$u\cdot f_0$}
			\put(-111,50){$u\cdot f_3$}
			\put(-8,-10){$1\cdot f_3$}
			\put(3,20){$u\cdot f_2$}
			\put(-8,50){$u\cdot f_1$}
		\end{center}
		\caption{Cross-sections of the neighbourhoods of the edges $1\cdot e_0$, $1\cdot e_1$ of~$\mathcal{V}^{ab}$. The colours of edges and triangles are indicated.}
		\label{fig:m004_edges_upstairs} 
	\end{figure}
	\subsection{The taut polynomial}
	To find the presentation matrix $D^L$ it is enough to know the (inverses of) Laurent coefficients of the triangles attached to $1\cdot e_0$ and $1\cdot e_1$. They can be read off from Figure \ref{fig:m004_edges_upstairs}. Note that~$1\cdot e_i$ is lower large only in its two lowermost triangles. Recall that by Corollary \ref{cor:fast_comp} the taut polynomial of $\mathcal{V}$ is equal to the greatest common divisor of the matrix $D^L_Y$, obtained from $D^L$ by deleting its first column, corresponding to the tree $Y = \lbrace f_0 \rbrace$.
	We have
	\[D^L_Y(u) = \begin{blockarray}{l c c c }
		&  f_1   &   f_2 & f_3 \\
		\begin{block}{l [c c c]}
			e_0&-1 & 1-u^{-1} & -1\\
			e_1&1-u^{-1} & -u^{-1} & 1-u^{-1} \\
		\end{block}
	\end{blockarray}\]
	and hence 
	\[\Theta_\mathcal{V} = 1-3u+u^2\]
	up to a unit in $\zz\lbrack u^{\pm1} \rbrack$.
	\subsection{The veering polynomial}
	To find the presentation matrix $N^L$ it is enough to know the (inverses of) Laurent coefficients of the tetrahedra attached to $1\cdot e_i$. 
	They can be read off from Figure \ref{fig:m004_edges_upstairs}. Recall that among side tetrahedra we only take into account the ones which have the bottom diagonal of a different colour than $1\cdot e_i$. By Corollary~\ref{cor:triangles_of_same_colour} this boils down to skipping the lowermost tetrahedra. We get
	\[N^L(u) = \begin{blockarray}{l c c }
		&  t_0  &   t_1  \\
		\begin{block}{l [c c]}
			e_0&-u^{-1} & 1-2u^{-1} \\
			e_1&1-2u^{-1} & -u^{-2}  \\
		\end{block}
	\end{blockarray}.\]
	Thus 
	\[V_\mathcal{V}^L = -(u^{-3} - 4u^{-2}+4u^{-1}-1)\]
	The minus sign in front is necessary because $e_0$ is the top diagonal of $t_1$ and~$e_1$ is the top diagonal of $t_0$; see Remark \ref{remark:not_up_to_a_unit}.
	Up to a unit we have
	\[V_\mathcal{V}^L = (u-1)\cdot \Theta_{\mathcal{V}}.\]
	\section{The Teichm\"uller polynomial}\label{sec:Teich}
	Let $N$ be a finite volume, oriented, hyperbolic 3-manifold. There is norm on $H^1(N;\rr) \cong H_2(N,\partial N;\rr)$, called the \emph{Thurston norm}, whose unit ball is a polytope with rational vertices \cite[Theorem 2]{Thur_norm}. It may admit some (top-dimensional) faces, called \emph{fibred faces}, which encode the ways in which $N$ fibres over the circle.
		
	Let $\face{F}$ be a fibred face of the Thurston norm ball in $H^1(N;\rr)$. Any integral primitive class in the interior of the cone $\rr_+\hspace{-0.1cm}\cdot \face{F}$ determines a fibration
	\[S \rightarrow N \rightarrow S^1\]
	over the circle \cite[Theorem 3]{Thur_norm}. We can express $N$ as the mapping torus
	\[N =\bigslant{\left(S \times \lbrack 0, 1 \rbrack\right)}{\lbrace (x,1)\sim (\psi(x),0) \rbrace},\]
	where $\psi$ is a pseudo-Anosov homeomorphism of the surface $S$ \cite[Proposition 2.6]{Thurston_map_tor}. It is called the \emph{monodromy of a fibration}.
	The monodromy determines the \emph{suspension flow} on $N$ defined as the unit speed flow along the curves $\lbrace x \rbrace \times \lbrack 0 ,1 \rbrack$. It admits a finite number of closed \emph{singular orbits} $\ell_1, \ldots, \ell_k \subset N$. The singular orbits arise from the prong-singularities of the invariants foliations of $\psi$ in $S$.	
	Fried showed that (up to isotopy) the flow does not depend on the chosen integral homology class in  $\rr_+\hspace{-0.1cm}\cdot \face{F}$ \cite[Theorem~14.11]{Fried_suspension}. Therefore the set of singular orbits depends only on the face. We set
	\[\mathrm{sing}(\face{F}) = \lbrace \ell_1, \ldots, \ell_k\rbrace\]
	\begin{definition}
		Let $\face{F}$ be a fibred face of the Thurston norm ball in $H^1(N;\rr)$. We say that $\face{F}$ is \emph{fully-punctured} if the set $\mathrm{sing}(\face{F})$ is empty.
	\end{definition}
	McMullen defined a polynomial invariant $\Theta_{\texttt{F}}$ of $\face{F}$, called the \emph{Teichm\"uller polynomial}. Landry, Minsky and Taylor proved that it is closely related to the taut polynomial of the veering triangulation associated to $\face{F}$ \cite[Proposition 7.2]{LMT}. 	An algorithm to compute the taut polynomial is given in Subsection \ref{subsec:computation}. In this section we use it to give an algorithm to compute $\Theta_{\texttt{F}}$.
	
\subsection{Veering triangulations associated to fibred faces}\label{subsec:veer:fibred}
Let $N$ be a finite volume, oriented, hyperbolic 3-manifold. Let $\face{F}$ be a fibred face of the Thurston norm ball in $H^1(N;\rr)$. If we pick a fibration lying over $\face{F}$ we can follow Agol's algorithm \cite[Section~4]{Agol_veer} to construct a layered veering triangulation $\mathcal{V}$ of $M =  N \bez \mathrm{sing}(\face{F})$.
The fact that~$\mathcal{V}$ does not depend on the chosen fibration from $\rr_+\hspace{-0.1cm}\cdot \face{F}$ is proven in \cite[Proposition~2.7]{MinskyTaylor}. In this section we change the previous notation and set
\begin{gather*}H_N := \bigslant{H_1(N;\zz)}{\text{torsion}}\\
	H_M := \bigslant{H_1(M;\zz)}{\text{torsion}}.
\end{gather*} The inclusion of $M$ into $N$ induces an epimorphism $i_\ast: H_M \rightarrow H_N$.  
\begin{lemma}[Proposition 7.2 in \cite{LMT}] \label{lem:computing_Teich}
	With the notation as above we have
	\[\pushQED{\qed} 
	\Theta_{\mathtt{F}} = i_\ast(\Theta_{\mathcal{V}}). \qedhere
	\popQED\] 	
\end{lemma}	
In particular, if the face \face{F} is fully-punctured, then its Teichm\"uller polynomial is equal to the taut polynomial of $\mathcal{V}$.
We gave an algorithm to compute the taut polynomial in Subsection \ref{subsec:computation}. In Subsections \ref{subsec:epimorphism} and \ref{subsec:boundary_cycles} we explain how to compute $i_\ast(\Theta_\mathcal{V})$. 
	\subsection{Classical (fully-punctured) examples}
	The majority of the computations of the Teichm\"uller polynomial previously known in the literature concern only fully-punctured fibred faces. Table \ref{tab:classical_examples} presents the output of the algorithm \texttt{TautPolynomial} applied to some of these examples.
	
	\begin{table}[h]
		\begin{tabular}{|c | c|}
			\hline
			\multicolumn{2}{|c|}{Example 1}\\ \hline
			\rule{0pt}{0.4cm}Source of the example& McMullen \cite[Subsection 11.I]{McMullen_Teich}\\ \hline
			\rule{0pt}{0.4cm}	Polynomial in the source& $1-u-ut-ut^{-1}+u^2$\\ \hline
			\rule{0pt}{0.4cm}Veering triangulation $\mathcal{V}$& \texttt{eLMkbcddddedde\_2100}\\ \hline
			\rule{0pt}{0.4cm} \texttt{TautPolynomial}($\mathcal{V}$)& $a^2b-a^2-ab-b^2+b$\\ \hline 
			\rule{0pt}{0.4cm} Change of basis& $t \mapsto ab^{-1}$, $u \mapsto a$\\ \hline \hline
			\multicolumn{2}{|c|}{Example 2}\\ \hline
			\rule{0pt}{0.4cm}Source of the example& McMullen \cite[Subsection 11.II]{McMullen_Teich}\\ \hline
			\rule{0pt}{0.4cm}	Polynomial in the source& $t^{-2}-ut-u-ut^{-1}-ut^{-2}-ut^{-3}+u^2$\\ \hline
			\rule{0pt}{0.4cm}Veering triangulation $\mathcal{V}$& \texttt{ivvPQQcfghghfhgfaddddaaaa\_20000222}\\ \hline
			\rule{0pt}{0.4cm} \texttt{TautPolynomial}($\mathcal{V}$)& $ab^4-a^2b^2+ab^3+ab^2+ab-b^2+a$\\ \hline \rule{0pt}{0.4cm} Change of basis& $t \mapsto b$, $u \mapsto ab^{-1}$\\ \hline \hline
			\multicolumn{2}{|c|}{Example 3}\\ \hline
			\rule{0pt}{0.4cm}Source of the example& Lanneau \& Valdez \cite[Subsection 7.2]{LanVal}\\ \hline
			\rule{0pt}{0.4cm}	Polynomial in the source& $u^2 - ut_At_B-ut_B-u-ut_A^{-1}+t_B$\\ \hline
			\rule{0pt}{0.4cm}
			Veering triangulation $\mathcal{V}$& \texttt{gvLQQcdeffeffffaafa\_201102} \\ \hline
			\rule{0pt}{0.4cm} \texttt{TautPolynomial}($\mathcal{V}$)& $a^2bc^2-abc-ac^2-ab-ac+1$\\ \hline
			\rule{0pt}{0.4cm} Change of basis& $t_A \mapsto bc^{-1}$, $t_B \mapsto b^{-1}c^2$, $u \mapsto ac^2$\\ \hline
			
		\end{tabular}
		\caption{Using \texttt{TautPolynomial} to calculate some  Teichm\"uller polynomials previously known in the literature.}
		\label{tab:classical_examples}
	\end{table}
	\FloatBarrier
	\subsection{Epimorphism $i_\ast: H_M \rightarrow H_N$} 
	\label{subsec:epimorphism}
	We follow the notation introduced in Subsection~\ref{subsec:veer:fibred}. By Lemma \ref{lem:computing_Teich} in order to compute the Teichm\"uller polynomial of $\face{F}$ we need  to establish a way of computing $i_\ast: H_M \rightarrow H_N$.
	
	A strategy to do that is as follows. Fix a monodromy $\psi: S\rightarrow S$ of a fibration of $N$ over
	the circle such that the homology class $\lbrack S \rbrack \in H_2(N, \partial N;\rr)$ lies in $\rr_+ \hspace{-0.1cm}\cdot \face{F}$. Recall that $M = N \bez \mathrm{sing}(\face{F})$. Define $\mathring{S} = S \cap M$ and let $\mathring{\psi}$ be the restriction of $\psi$ to $\mathring{S}$. 
		
	In the compact model $M$ has at least $k$ toroidal boundary components. There are $k$ of them, $T_1, \ldots, T_k$, whose inclusions into $N$ are the boundaries of the regular neighbourhoods of the singular orbits $\ell_1, \ldots, \ell_k$ of $\face{F}$. For \mbox{$j \in \lbrace 1, \ldots, k \rbrace$} let $\gamma_j$ denote the intersection $\mathring{S} \cap T_j$. The curve $\gamma_j$ might not be connected, but its connected components are parallel in $T_j$. These curves determine Dehn filling coefficients on $T_j$'s which produce the manifold $N$ out of $M$ and restore the surface $S$ from $\mathring{S}$. Therefore a presentation for $H_1(N;\zz)$ can be obtained from a presentation for $H_1(M;\zz)$ by adding extra relations which say that the homology classes of curves $\gamma_1, \ldots, \gamma_k$ are trivial.
	
	\begin{remark*}
		If $\gamma_j$ is not connected, the relation $\gamma_j = 1$ is a multiple. However, since we eventually use only the torsion-free part $H_N$ of $H_1(N;\zz)$ killing $\gamma_j$ is sufficient.
	\end{remark*}

	Recall from Subsection~\ref{subsec:encoding} that our presentation for $H_M$  uses the 2-complex $\mathcal{D}$ dual to $\mathcal{V}$. To find an explicit expression of $H_N$ as a quotient of $H_M$ it is enough to find a collection $C = \lbrace c_1, \ldots, c_k \rbrace$ of simplicial 1-cycles in $\mathcal{D}$ which are homologous to $\lbrace \gamma_1, \ldots, \gamma_k \rbrace$, respectively. Then
	\[H_N = H_M^C.\]
	The meaning of the superscript $C$ here is as in Subsection \ref{subsec:encoding}. The process of finding~$C$ is explained in Subsection \ref{subsec:boundary_cycles}. Below we assume that $C$ is already known.
	
	Let $n$ be the number of tetrahedra in the veering triangulation $\mathcal{V}$ of $M$. Let $X$ be the dual graph of $\mathcal{V}$. Recall that at the very beginning the algorithm \texttt{FacePairings} fixes a spanning tree $Y$ of $X$. Denote by $X_Y$ the  graph obtained from $X$ by contracting $Y$ to a point.	
	The output of \texttt{FacePairings}($\mathcal{V}, \lbrack \ \rbrack$, return type = ``matrix'') is a pair $(U,r)$, where $r$ equals the rank of $H_M$ and the last $r$ columns of the inverse $U^{-1} \in \mathrm{GL}(n+1,\zz)$ give the expressions for the basis elements of $H_M$ as simplicial 1-cycles in  $X_Y$. 
	
	Let $C$ be a collection of 1-cycles in $X$ homologous to $\lbrace \gamma_1, \ldots, \gamma_k\rbrace$. The output of \texttt{FacePairings}($\mathcal{V}$, $C$, return type = ``matrix'') is a pair $(U',s)$, where $s$ equals the rank of $H_M^C$ and the last $s$ rows of the matrix $U' \in \mathrm{GL}(n+1,\zz)$ encode $H_M^C$-pairings for the edges of $X_Y$. 
	
	Let $A_1$ be the matrix obtained from $U^{-1}$ by deleting its first $n+1-r$ columns. Let $A_2$ be the matrix obtained from $U'$ by deleting its first $n+1-s$ rows. Then the matrix $A = A_2\cdot A_1$ represents the epimorphism $i_\ast: H_M \rightarrow H_M^C$ written with respect to the bases of $H_M, H_M^C$ fixed by the algorithm \texttt{FacePairings}.

	\subsection{Boundary components  as dual cycles}\label{subsec:boundary_cycles}
	 The goal of this subsection is to present an algorithm which given a veering triangulation $\mathcal{V}$ and a surface $\mathring{S}$ carried by $\mathcal{V}$ outputs a collection of simplicial 1-cycles in the 2-complex $\mathcal{D}$ dual to $\mathcal{V}$ which are homologous to the boundary components of $\mathring{S}$.

	We follow the notation introduced in Subsections \ref{subsec:veer:fibred} and \ref{subsec:epimorphism}.		
	Since the veering triangulation $\mathcal{V}$ can be built by layering ideal tetrahedra on an ideal triangulation of the given fibre $\mathring{S}$ \cite[Section 4]{Agol_veer}, there exists a nonzero, nonnegative integral solution $w = (w_1, \ldots, w_{2n})$ to the system of branch equations of $\mathcal{V}$ such that the surface  \[S^w =\sum\limits_{i=1}^{2n}w_if_i\]
	represents the relative homology class of $\lbrack \mathring{S}\rbrack $  in $H_2(M, \partial M;\rr)$. Note that $w$ is \emph{not} unique. 
	
	 Recall that we consider $M$ in the compact model. It has $b\geq k$ boundary components $T_1, \ldots, T_b$ and we order them so that the first $k$ come from the singular orbits $\lbrace \ell_1, \ldots, \ell_k \rbrace$ in $N$ and the last $b-k$ come from $\partial N$. The boundary components $\gamma_j$ of the surface $S^w$ are carried by the boundary track; see Subsection \ref{subsec:boundary_track}. The tuple $w$ endows each boundary track $\beta_1, \ldots, \beta_k$, $\beta_{k+1}, \ldots, \beta_b$ with a nonnegative integral transverse measure which encodes the boundary components $\gamma_j = \partial S^w \cap T_j$ for $1\leq j \leq b$.	
	The general idea to find the cycle $c_j$ homologous to $\gamma_j$ is as follows.
	\begin{enumerate} \item Perturb~$\gamma_j$ slightly, so that it becomes transverse to the boundary track. 
		\item Push the (perturbed) $\gamma_j$ away from the boundary of $M$ into the dual graph $X$.  
		\end{enumerate}
	First we define an auxiliary object, the \emph{dual boundary graph} $X^\beta$.
	\begin{definition}
		Let $(\mathcal{T}, \alpha)$ be a (truncated) transverse taut ideal triangulation of a (compact) 3-manifold~$M$. The \emph{dual boundary graph} $X^\beta$ is the graph contained in $\partial M$ which is dual to the boundary track $\beta$ of $(\mathcal{T}, \alpha)$. It is oriented by  $\alpha$.
	\end{definition}
	If $b \geq 1$ then the dual boundary graph is disconnected, with connected components $X^\beta_1, \ldots, X^\beta_b$ such that $X^\beta_j$ is dual to the boundary track $\beta_j$.
	If an edge of $X^\beta$ is dual to a branch of $\beta$ lying in $f \in F$, then we label it with~$f$. Hence for every $f \in F$ there are three edges of $X^\beta$ labelled with~$f$.
	
	\begin{example}The dual boundary graph for the veering triangulation \linebreak \texttt{cPcbbbiht\_12} of the figure eight knot complement is given in Figure \ref{fig:boundary_torus_m004}.
	\end{example}
	
	\begin{figure}[h]
		\begin{center}
			\includegraphics[width=0.9\textwidth]{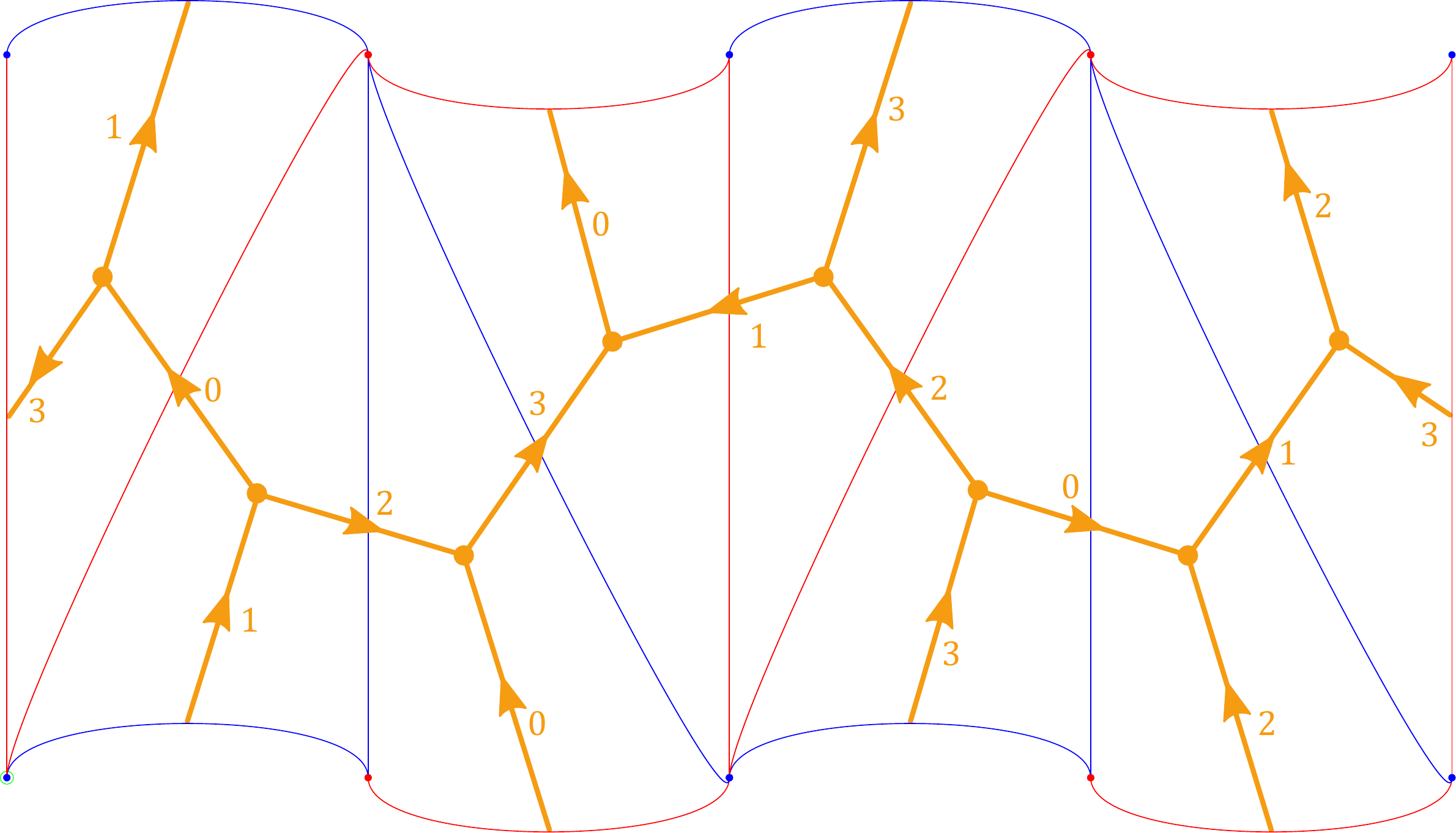}
		\end{center}
		\caption{The boundary track $\beta$ for the veering triangulation \texttt{cPcbbbiht\_12} of the figure eight knot complement and its dual graph~$X^\beta$ (in orange). The orientation on the edges of $X^\beta$ is determined by the coorientation on their dual branches of $\beta$. Edges of $X^\beta$ are labelled with indices of triangles that they pass through.  		 		
			The picture of the boundary track is taken from \cite{VeeringCensus}. The dual boundary graph has been added by the author.}
		\label{fig:boundary_torus_m004} 
	\end{figure}

	The dual boundary graph is a combinatorial tool that we use to encode paths which are transverse to the the boundary track. Moreover, every cycle in the boundary graph can be homotoped inside $M$ to a cycle in the dual graph. 
	\begin{lemma}\label{lem:cycle_in_boundary}
		Let $(\mathcal{T}, \alpha)$ be a transverse taut triangulation of a 3-manifold $M$. Denote by $X$, $X^\beta$ its oriented dual graph and its oriented dual boundary graph, respectively.
		Let~$c^\beta$ be a cycle in $X^\beta$. Suppose it passes consecutively through the edges of $X^\beta$ labelled with $f_{i_1}, \ldots, f_{i_l}$, where $1\leq i_j \leq 2n$. 
		
		We set
		\[s_{i_j} = \begin{cases}
			+1 & \text{if $c$ passes through an edge labelled with $f_{i_j}$ upwards}\\
			-1 & \text{if $c$ passes through an edge labelled with $f_{i_j}$ downwards}.
		\end{cases} \]
		
		Let $c$ be the cycle $(s_{i_1} f_{i_1}, \ldots, s_{i_l} f_{i_l})$ in the dual graph $X$. If we embed $X^\beta$ and $X$ in~$M$ in the natural way, then $c^\beta$ and $c$ are homotopic.
	\end{lemma}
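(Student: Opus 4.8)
The plan is to prove the statement by pushing the dual boundary graph $X^\beta$ off $\partial\mathring M$, into the interior of $\mathring M$ and onto the dual graph $X$, while keeping control of which $2$–faces the intermediate arcs cross.

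First I would identify the complementary regions of the boundary track. Working in the compact model, $\beta=\mathcal B\cap\partial\mathring M$ is, as a topological graph, the $1$–skeleton of the decomposition of $\partial\mathring M$ into the links of the ideal vertices of the tetrahedra of $\mathcal V$ (the train track structure is just a choice of smoothing at the switches, determined by the taut structure, and does not change the graph). Hence $\beta$ cuts $\partial\mathring M$ into open disks, one for each pair (tetrahedron $t$, ideal vertex of $t$); each such region $R$ is the truncation triangle of its tetrahedron, so in particular it lies inside a single truncated tetrahedron $t(R)$. A branch $b$ of $\beta$ is the link of a common ideal vertex $w$ of some $f\in F$; the two regions it separates are the truncation triangles at $w$ of the two tetrahedra glued along $f$, one lying immediately below $f$ and one immediately above. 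Comparing the ``upward'' conventions — the coorientation of $b$ inherited from $f$, the induced orientation of the dual edge of $X^\beta$, and the ``upward'' orientation of the dual edge $e_f^\ast$ of $X$ — I would check that the edge of $X^\beta$ dual to $b$, traversed along its orientation, runs from the region inside the tetrahedron immediately below $f$ to the region inside the tetrahedron immediately above $f$; this is exactly the ordered pair of endpoints of $e_f^\ast$. Carrying out this identification of the regions and matching of the three orientation conventions is the step I expect to demand the most care; everything afterwards is formal.

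Granting it, write $c^\beta=\epsilon_1\ast\cdots\ast\epsilon_l$ as a concatenation of edges of $X^\beta$, where $\epsilon_j$ is labelled $f_{i_j}$ and traversed with sign $s_{i_j}$, and let $p_j$ be the endpoint shared by $\epsilon_j$ and $\epsilon_{j+1}$ (indices mod $l$, so $R_0=R_l$), lying in the region $R_j$. Because $\epsilon_j\subset\partial\mathring M$ and $\mathcal B\cap\partial\mathring M=\beta$, the arc $\epsilon_j$ meets $\mathcal B=\mathcal T^{(2)}$ transversely in the single point $\epsilon_j\cap b_j\subset f_{i_j}$. Embedding $X$ in $\mathring M$ in the standard way (with $v_t$ at the centre of the truncated tetrahedron $t$, and $e_f^\ast$ crossing $f$ once), I would pick for each region $R$ a path inside $t(R)$ from the $X^\beta$–vertex in $R$ to $v_{t(R)}$, avoiding $\mathcal T^{(2)}$, and slide each $p_j$ to $v_{t(R_j)}$ along it. Since each slide stays inside one truncated tetrahedron, this is a homotopy of the loop $c^\beta$ to a loop $c'=\epsilon_1'\ast\cdots\ast\epsilon_l'$ in which $\epsilon_j'$ runs from $v_{t(R_{j-1})}$ to $v_{t(R_j)}$ and still meets $\mathcal T^{(2)}$ transversely in exactly one point, lying on $f_{i_j}$, with crossing sign $s_{i_j}$.

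To finish, I would compare $\epsilon_j'$ with $e_{f_{i_j}}^\ast$ (traversed with sign $s_{i_j}$): by the first step they have the same ordered endpoints, and each crosses $\mathcal T^{(2)}$ exactly once, at $f_{i_j}$, in the same direction. Cutting each at its crossing point with $f_{i_j}$ expresses it as a concatenation of two arcs, one in each of the two truncated tetrahedra adjacent to $f_{i_j}$, each running from the relevant $v_t$ to a point of $f_{i_j}$. The truncated tetrahedra and the face $f_{i_j}$ are contractible, so these pairs of sub-arcs are homotopic rel endpoints, whence $\epsilon_j'\simeq e_{f_{i_j}}^\ast$ rel endpoints; concatenating over $j$ gives $c'\simeq c$, and therefore $c^\beta\simeq c'\simeq c$ in $\mathring M$. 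The crux, as indicated, is the combinatorial first step — describing the complementary regions of $\beta$ and verifying the compatibility of the coorientation of $\beta$, the orientation of $X^\beta$, and the orientation of $X$.
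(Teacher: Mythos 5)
Your proof is correct and is essentially an expanded, carefully justified version of the paper's own argument, which consists of the single sentence ``A homotopy between $c^\beta$ and $c$ can be obtained by pushing each edge of the cycle $c^\beta$ towards the middle of the triangle through which it passes'' together with a figure. Your identification of the complementary regions of $\beta$ with truncation triangles, the orientation bookkeeping, and the final rel-endpoints homotopy inside truncated tetrahedra is precisely what that figure depicts, so this is the same approach with the details filled in.
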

	\begin{proof}
		A homotopy between $c^\beta$ and $c$ can be obtained by pushing each edge of the cycle $c^\beta$ towards the middle of the triangle through which it passes; see Figure \ref{fig:pushing_away}.
	\end{proof}
		\begin{figure}[h]
		\begin{center}
			\includegraphics[width=0.24\textwidth]{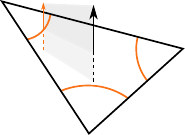}
		\end{center}
		\caption{Homotoping a dual boundary cycle to a dual cycle. The black arrow is an edge of $X$ dual to $f$. The orange arrow is an edge of $X^\beta$ labelled with $f$.}
		\label{fig:pushing_away} 
	\end{figure}
		
	Fix an integer $j$ between 1 and $k$.  The curve $\gamma_j = \partial S^w \cap T_j$  is contained in the boundary track~$\beta_j$. Let $\epsilon$ be a branch of $\beta_j$. 
	Let $s^-$ and $s^+$ be the initial and the terminal switches of $\epsilon$, respectively. We replace each subarc of $\gamma_j$ contained in $\epsilon$ by the following 1-chain $c_\epsilon$ in $X^\beta_j$
	\[
	c_\epsilon = 	- \begin{pmatrix}\text{outgoing branches}\\ \text{of $s^-$ above $\epsilon$}\end{pmatrix} + \begin{pmatrix}\text{incoming branches}\\ \text{ of $s^+$ above $\epsilon$}\end{pmatrix}.
	\]
	This is schematically depicted in Figure \ref{fig:pushing_up}. 
	
	\begin{figure}[h]
		\begin{center}
			\includegraphics[width=0.65\textwidth]{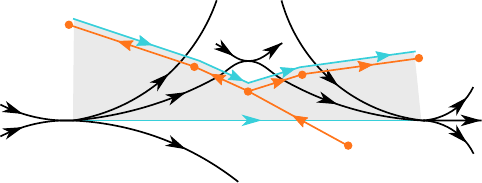}
			\put(-115,20){$\epsilon$}
			\put(-200,20){$s^-$}
			\put(-35,20){$s^+$}
			\put(-40,67){$c_\epsilon$}
		\end{center}
		\caption{A local picture of the boundary track $\beta$ (in black) and the dual boundary graph $X^\beta$ (in orange). We push the branch $\epsilon$ of~$\beta$ (light blue) upwards to the 1-chain $c_\epsilon$ in $X^\beta$ (also light blue).}
		\label{fig:pushing_up} 
	\end{figure}
	
	Let us denote the transverse measure on $\epsilon$ determined by $w = (w_1, \ldots, w_{2n})$ by $w(\epsilon)$. 
	The curve $\gamma_j$ passes through $\epsilon$ $w(\epsilon)$ times. Since  chain groups are abelian, the 1-cycle in $X^\beta$ homotopic to $\gamma_j$ is given by
	\[c_j^\beta = \sum\limits_{\epsilon \subset \beta_j} w(\epsilon) c_\epsilon,\]
	where the sum is over all branches $\epsilon$ of $\beta_j$.
	By Lemma \ref{lem:cycle_in_boundary} we can homotope the 1-cycles $c_j^\beta$ in $X^\beta$ to 1-cycles $c_j$ in $X$.
	
	The procedure explained in this subsection is summed up in algorithm \texttt{Boundary} \texttt{Cycles} below. The algorithm is due to Saul Schleimer and Henry Segerman. We include it here, with permission, for completeness. 
	
	In the algorithm we use the notion of \emph{upward} and \emph{downward} edges. They are defined as follows. A vertex $v$ of an ideal triangle $f \in F$ gives a branch $\epsilon_v$ of $\beta$. We say that an edge $e$ of $f$ is the \emph{downward} edge for $v$ in $f$ if its intersection with $\partial M$ is the initial switch of $\epsilon_v$. An edge $e$ of $f$ is the \emph{upward} edge for~$v$ in $f$ if its intersection with $\partial M$ is the terminal switch of $\epsilon_v$. The names reflect the fact that when we homotope the branch~$\epsilon_v$ to a 1-chain in $X^\beta$ we go downwards above the initial switch of $\epsilon_v$ and upwards above the terminal switch of $\epsilon_v$; see Figure \ref{fig:pushing_up}.

	\begin{figure}[h]
		\begin{center}
			\includegraphics[width=0.21\textwidth]{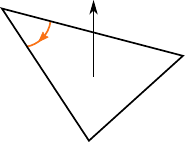}
			\put(-95,60){$v$}
			\put(-30,55){downward edge}
			\put(-130,20){upward edge}
		\end{center}
		\caption{Downward and upward edges for an ideal vertex $v$ of a triangle.}
		\label{fig:cooriented_triangle_edges} 
	\end{figure}
	\begin{algorithm}[H]
		\caption*{\textbf{Algorithm} \texttt{BoundaryCycles} \\ Expressing boundary components of a surface carried by a transverse taut triangulation $(\mathcal{T}, \alpha)$ as simplicial 1-cycles in the dual graph $X$}\label{alg:boundaryCycles}
		\begin{algorithmic}[1]
			\Statex 	\textbf{Input:} \begin{itemize} \item A transverse taut triangulation $\mathcal{T} = ((T,F, E),\alpha)$  of a cusped 3-manifold $M$ with $n$ tetrahedra and $b$ ideal vertices  \item A nonzero tuple $w \in \zz^{2n}$ of integral nonnegative weights on elements of $F$\end{itemize}
			\Statex 	\textbf{Output:} \begin{itemize}
				\item 
				List of $b$ vectors from $\zz^{2n}$, each encoding a simplicial 1-cycle $c_j$ in~$X$ homotopic to $\partial S^w \cap T_j$, for $1\leq j \leq b$\end{itemize}
			\State Boundaries := the list of $j$ zero vectors from $\zz^{2n}$
			\For{$f$ in $F$}
			\For{vertex $v$  of $f$}
			\State $j :=$ the index of $v$ as an ideal vertex of $\mathcal{T}$
			\State $e_1, e_2:$ = the downward and upward edges of $v$ in $f$
			\For {$f'$ above $f$ at the same side of $e_1$}
			\State subtract $w(f)$ from the entry $f'$ of Boundaries[$j$]
			\EndFor
			\For {$f'$ above $f$ at the same side of $e_2$}
			\State add $w(f)$ to the entry $f'$ of Boundaries[$j$]
						\Statex \hfill \color{gray} continued on the next page
									\algstore{part1:cycles}
					\end{algorithmic}
				\end{algorithm}
			\begin{algorithm}[H]
			\caption*{\textbf{Algorithm} \texttt{BoundaryCycles} continued}
			\begin{algorithmic}[1]
			\algrestore{part1:cycles}
			\EndFor
			\EndFor
			\EndFor
			\State \Return Boundaries
		\end{algorithmic}
	\end{algorithm}
	\begin{remark*}
		In this section we  considered only layered triangulations, because the Teichm\"uller polynomial is defined only for fibred faces of the Thurston norm ball. However, algorithm \texttt{BoundaryCycles} can be applied to a measurable triangulation. By \cite[Theorem 5.12]{LMT} a measurable veering triangulation determines a non-fibred face of the Thurston norm ball. 
	\end{remark*}

	\subsection{Computing the Teichm\"uller polynomial}\label{subsec:Teich_comp}
	In this subsection we finally give an algorithm to compute the Teichm\"uller polynomial of any fibred face of the Thurston norm ball. 
	
	By \texttt{Veering} we denote an algorithm which given a pseudo-Anosov homeomorphism $\psi:S \rightarrow S$ outputs 
	\begin{itemize}
		\item the veering triangulation $\mathcal{V}$ of the mapping torus of $\mathring{\psi}:\mathring{S} \rightarrow \mathring{S}$, where $\mathring{S}$ is obtained from $S$ by puncturing it at the singularities of $\psi$,
		\item a nonnegative solution $w=(w_1,\ldots, w_{2n})$ to the system of branch equations of~$\mathcal{V}$ such that  \(S^w = \sum\limits_{i=1}^{2n}w_if_i\) is homologous to the fibre $\mathring{S}$. 
	\end{itemize}Algorithm \texttt{Veering} is explained in \cite[Section 4]{Agol_veer}. It has been implemented by Mark Bell in \texttt{flipper} \cite{flipper}.
	\begin{algorithm}[H]
		\caption*{\textbf{Algorithm} \texttt{Teichm\"ullerPolynomial} \\ Computing the Teichm\"uller polynomial of a fibred face of the Thurston norm ball \label{alg:Teich}}
		\begin{algorithmic}[1]
			\Statex 	\textbf{Input:} 
			A pseudo-Anosov homeomorphism $\psi:S\rightarrow S$ 
			
			\Statex 	\textbf{Output:} The Teichm\"uller polynomial of the face $\face{F}$ in $H_2(N, \partial N;\rr)$, where $N$ is the mapping torus of $\psi$ and $\lbrack S \rbrack \in \rr_+\hspace{-0.1cm}\cdot \face{F}$
			\State $(\mathcal{V},w):$ = \texttt{Veering}($\psi$) 
			\State permute the vertices of $\mathcal{V}$ so that the first $k$ correspond to the torus boundary components of the underlying manifold of $\mathcal{V}$ which are filled in $N$
			\State $n$ : = number of tetrahedra of $\mathcal{V}$
			\State $\Theta: = $ \texttt{TautPolynomial}$(\mathcal{V})$
			\State exp:= \texttt{Exponents}($\Theta$)
			\State coeff:= \texttt{Coefficients}($\Theta$)
			\State $U,r$ := \texttt{FacePairings}($\mathcal{V}, \lbrack \ \rbrack$, return type = ``matrix'')
			\State $A_1$ := the matrix obtained from $U^{-1}$ by deletings its first $n+1-r$ columns
			\State $C:= \texttt{BoundaryCycles}(\mathcal{V}, w)$
			\State $C:= C\lbrack 1:k \rbrack $
			\State $U',s$ := \texttt{FacePairings}($\mathcal{V}, C$, return type = ``matrix'') \label{line:HCpair}
			\State $A_2$ := the matrix obtained from $U'$ by deletings its first $n+1-s$ rows
			\State newExp := $\lbrack \ \rbrack$
			\For{$v$  in exp}
			\State append newExp with $A_2 A_1 (v)$
			\EndFor
			\State $\Theta:=$  $\sum\limits_{i=1}^{\text{length}(\text{coeff})} \text{coeff}\lbrack i \rbrack \cdot u^{\text{newExp}\lbrack i\rbrack}$\Comment{$u=(u_1, \ldots, u_s)$ and $u^v = u_1^{v_1}\cdots u_s^{v_s}$}
			\State	\Return $ \Theta$
		\end{algorithmic}
	\end{algorithm}
	\begin{proposition}\label{prop:alg_teich_correct}
		Let $\psi:S \rightarrow S$ be a pseudo-Anosov homeomorphism. Denote by $N$  its mapping torus. Let $\face{F}$ be the fibred face of the Thurston norm ball in $H_2(N, \partial N;\zz)$ such that  $\lbrack S \rbrack \in \rr_+ \hspace{-0.1cm}\cdot \face{F}$. Then the output of \emph{\texttt{Teichm\"ullerPolynomial}}$(\psi)$ is equal to the Teichm\"uller polynomial $\Theta_{\emph{\texttt{F}}}$ of~$\face{F}$.
	\end{proposition}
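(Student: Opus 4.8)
The plan is to chain together three facts already in hand: Proposition~\ref{prop:veering_output} (correctness of \texttt{TautPolynomial}), Theorem~\ref{thm:taut=teich} (taut $=$ Teichm\"uller for fully-punctured faces), and Lemma~\ref{lem:McMullen} (McMullen's puncturing formula $\Theta_{\texttt{F}}=\Theta^{i_\ast}_{\mathring{\texttt{F}}}$). The only genuinely new point to verify is that the list $C$ produced on lines~2--3 of \texttt{Teichm\"ullerPolynomial} is such that the surjection $\rho_C\colon\mathring H\to\mathring H^C$ of Proposition~\ref{prop:veering_output} coincides with the inclusion-induced epimorphism $i_\ast\colon\mathring H\to H$ from Subsection~\ref{subsec:not_fully_punct}.

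First I would unwind line~1. By Agol's construction \cite[Section~4]{Agol_veer}, $(\mathcal V,w)=\texttt{Veering}(\psi)$ gives the layered veering triangulation $\mathcal V=(\mathcal T,\alpha,\nu)$ of the mapping torus of $\mathring\psi\colon\mathring S\to\mathring S$ together with a nonnegative integral solution $w$ of the branch equations of $(\mathcal T,\alpha)$ with $[S^w]=[\mathring S]$ in $H_2(\mathring M,\partial\mathring M;\rr)$. Since $\mathring M$ is precisely the mapping torus of $\mathring\psi$ and $\mathring{\face F}$ is the fibred face containing $[\mathring S]$, uniqueness in \cite[Proposition~2.7]{MinskyTaylor} identifies $\mathcal V$ with the veering triangulation associated to $\mathring{\face F}$. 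In particular the ambient group $H^{ab}$ of Section~\ref{sec:Mab} applied to the manifold underlying $\mathcal V$ is $\mathring H$, so Proposition~\ref{prop:veering_output} applies to $\mathcal V$.

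Next I would identify $C$. From $[S^w]=[\mathring S]$ one gets $\partial[S^w]=\partial[\mathring S]$, so for each $j$ the curve $\partial S^w\cap T_j$ is homologous in $T_j\subset\partial\mathring M$, hence in $\mathring M$, to $\gamma_j=\mathring S\cap T_j$. By Lemmas~\ref{lem:cycle_in_boundary} and~\ref{lemma:w_exists} and the discussion of Subsection~\ref{subsec:boundary_cycles}, \texttt{BoundaryCycles}$(\mathcal V,w)$ returns simplicial $1$-cycles in the dual graph $X$ homotopic to $\partial S^w\cap T_j$; truncating to $C=C[1{:}k]$ retains exactly the cycles $c_1,\dots,c_k$ coming from the boundary tori $T_1,\dots,T_k$ created by removing $\ell_1,\dots,\ell_k$. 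By Subsection~\ref{subsec:finding_C}, $M$ is the Dehn filling of $\mathring M$ along $\gamma_1,\dots,\gamma_k$, so a presentation of $H_1(M;\zz)$ is obtained from one of $H_1(\mathring M;\zz)$ by adjoining the relations $[\gamma_j]=0$, equivalently $[c_j]=0$; passing to torsion-free quotients gives $H=\mathring H^{C}$ with quotient map $i_\ast$. The possible non-primitivity of these relations when some $\gamma_j$ is disconnected is irrelevant after killing torsion, as remarked in Subsection~\ref{subsec:finding_C}. Hence $\rho_C=i_\ast$.

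Finally I would assemble the chain. By Proposition~\ref{prop:veering_output} the value returned on line~4 is $\Theta^{\rho_C}_{\mathcal V}=\Theta^{i_\ast}_{\mathcal V}$, the zeroth Fitting invariant of $i_\ast^\ast(\module_\alpha(\mathcal V))$. Theorem~\ref{thm:taut=teich} supplies a $\zz[\mathring H]$-module isomorphism $T(\mathring{\mathcal L})\cong\module_\alpha(\mathcal V)$, which is preserved under extension of scalars along $i_\ast$, so $\Theta^{i_\ast}_{\mathcal V}=\Theta^{i_\ast}_{\mathring{\texttt{F}}}$; and Lemma~\ref{lem:McMullen} gives $\Theta^{i_\ast}_{\mathring{\texttt{F}}}=\Theta_{\texttt{F}}$. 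Thus the output of \texttt{Teichm\"ullerPolynomial}$(\psi)$ equals $\Theta_{\texttt{F}}$. I expect the middle step to be the main obstacle: pinning down that the \texttt{BoundaryCycles} output genuinely realizes the Dehn-filling relations, so that $\rho_C$ is exactly $i_\ast$ and not a strictly finer quotient, which is where the choice of $w$ with $[S^w]=[\mathring S]$ and the precise bookkeeping of \texttt{BoundaryCycles} must be matched up.
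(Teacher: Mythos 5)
Your proposal is correct and follows essentially the same route as the paper's own (very terse) proof: identify $\mathring H^C$ with $H$ via the \texttt{BoundaryCycles} output so that $\rho_C=i_\ast$, invoke Proposition~\ref{prop:veering_output} to identify the returned value as $\Theta^{i_\ast}_{\mathcal V}$, and then combine Theorem~\ref{thm:taut=teich} with Lemma~\ref{lem:McMullen}. Your writeup merely makes explicit some background the paper leaves to Subsections~\ref{subsec:not_fully_punct}--\ref{subsec:boundary_cycles} (the role of \cite[Proposition~2.7]{MinskyTaylor} in line~1, and why the Dehn filling relations match the cycles $c_1,\dots,c_k$).
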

	\begin{proof}	
		Let $\mathring{S}$ denote a surface obtained from $S$ by puncturing it at the singularities of the invariant foliations of~$\psi$. The pair $(\mathcal{V}, w)$ in line 1 consists of the veering triangulation of $M =  N \bez \mathrm{sing}(\face{F})$ associated to $\face{F}$ and a nonnegative solution to its system of branch equations which realises $\mathring{S}$. We permute the vertices of $\mathcal{V}$ so that the first $k$ correspond to the torus boundary components $T_1, \ldots, T_k$ of~$M$ (in the compact model) which are filled in~$N$. Then the list $C$ constructed in line 10 consists of dual cycles homologous to the  boundary components of $\mathring{S}=S^w$ which are contained in $T_1, \ldots, T_k$. Therefore $H_N=H_M^C$. 
		
		By Proposition~\ref{prop:veering_output} the algorithm \texttt{TautPolynomial}($\mathcal{V}$) outputs  the taut polynomial~$\Theta_{\mathcal{V}}$. As explained in Subsection \ref{subsec:epimorphism}, the matrix $A_2A_1$ represents the epimorphism $i_\ast: H_M \rightarrow H_M^C$, where the basis for $H_M$ is the same as the one we use for the computation of the taut polynomial. Each monomial $a_h\cdot h$ in~$\Theta_{\mathcal{V}}$ can be encoded by a pair $(a_h, v)$ where  $a_h \in \zz$, $v \in \zz^r$. The pair $(a_h, A_2A_1(v))$ then encodes the corresponding monomial $a_h\cdot i_\ast(h)$ appearing in $i_\ast(\Theta_\mathcal{V})$. Therefore by Lemma \ref{lem:computing_Teich} the polynomial $\Theta$ at line 17 is equal to $\Theta_{\mathtt{F}}$.	
	\end{proof}

	\bibliographystyle{abbrv}
	\bibliography{mybib}
\end{document}